\documentclass[12pt]{article}
\usepackage{latexsym, amssymb, amsthm, amsmath}
\usepackage{dsfont}

\textheight=25cm
\textwidth = 6.375 true in
\topmargin=-16mm
\marginparsep=0cm
\oddsidemargin=-0.7cm
\evensidemargin=-0.7cm
\headheight=13pt
\headsep=0.8cm
\parskip=0pt
\baselineskip=27pt
\hfuzz=4pt
\widowpenalty=10000

\DeclareMathAlphabet\gothic{U}{euf}{m}{n}

\setlength{\marginparwidth}{1 true in}


\makeatletter
\def\eqnarray{\stepcounter{equation}\let\@currentlabel=\theequation
\global\@eqnswtrue
\tabskip\@centering\let\\=\@eqncr
$$\halign to \displaywidth\bgroup\hfil\global\@eqcnt\z@
  $\displaystyle\tabskip\z@{##}$&\global\@eqcnt\@ne
  \hfil$\displaystyle{{}##{}}$\hfil
  &\global\@eqcnt\tw@ $\displaystyle{##}$\hfil
  \tabskip\@centering&\llap{##}\tabskip\z@\cr}

\def\endeqnarray{\@@eqncr\egroup
      \global\advance\c@equation\m@ne$$\global\@ignoretrue}

\def\@yeqncr{\@ifnextchar [{\@xeqncr}{\@xeqncr[5pt]}}
\makeatother

\allowdisplaybreaks

\bibliographystyle{tom}

\newtheorem{lemma}{Lemma}[section]
\newtheorem{thm}[lemma]{Theorem}
\newtheorem{cor}[lemma]{Corollary}
\newtheorem{prop}[lemma]{Proposition}

\theoremstyle{definition}

\newcommand{\gotb}{\gothic{b}}

\newcounter{teller}

\newenvironment{tabel}{\begin{list}%
{\rm  (\alph{teller})\hfill}{\usecounter{teller} \leftmargin=1.1cm
\labelwidth=1.1cm \labelsep=0cm \parsep=0cm}
                      }{\end{list}}

\newcounter{tellerr}

\newcounter{tellerrr}

\newcounter{proofstep}

\newcommand{\Ni}{\mathds{N}}

\newcommand{\Ri}{\mathds{R}}
\newcommand{\R}{\mathds{R}}
\newcommand{\Ci}{\mathds{C}}

\newcommand{\divv}{\mathop{\rm div}}

\newcommand{\RRe}{\mathop{\rm Re}}

\newcommand{\sgn}{\mathop{\rm sgn}}

\newcommand{\one}{\mathds{1}}

\hyphenation{groups}
\hyphenation{unitary}

\newcommand{\ca}{{\cal A}}
\newcommand{\cb}{{\cal B}}

\newcommand{\altnorm}[1]{{\left\vert\kern-0.25ex\left\vert\kern-0.25ex\left\vert #1 
    \right\vert\kern-0.25ex\right\vert\kern-0.25ex\right\vert}}

\begin{document}
\thispagestyle{empty}

\vspace*{1cm}
\begin{center}
{\large\bf $L^\infty$-estimates for the Neumann problem on general domains} \\[5mm]
\large  A.F.M. ter Elst, H. Meinlschmidt and J. Rehberg

\end{center}

\vspace{5mm}

\begin{list}{}{\leftmargin=1.8cm \rightmargin=1.8cm \listparindent=10mm 
   \parsep=0pt}
\item
\small
{\sc Abstract}.
Let $\Omega \subset \Ri^d$ be bounded open and connected. 
Suppose that $W^{1,2}(\Omega) \subset L^r(\Omega)$ for some $r > 2$.
Let $A$ be a pure second-order elliptic differential operator
with bounded real measurable coefficients on $\Omega$.
Let $q > d$ with $\frac{1}{2}-\frac{1}{q} > \frac{1}{r}$.
If $p$ is the dual exponent of~$q$, then we show that the pre-image 
of the space $(W^{1,p}(\Omega))^*$ under the map $A$ is contained in the space of 
bounded functions on $\Omega$. 
The considerations are
complemented by results on optimal Sobolev regularity for~$A$.
\end{list}

\let\thefootnote\relax\footnotetext{
\begin{tabular}{@{}l}
{\em AMS Subject Classification}. 35D10, 46E35.\\
{\em Keywords}. Regularity, Neumann problem, elliptic operators.
\end{tabular}}

\section{Introduction} \label{boundedS1}

If $A$ is a pure second-order elliptic operator with Dirichlet boundary 
conditions and real measurable coefficients on a bounded connected open 
set $\Omega \subset \Ri^d$, then it is not too hard to show that the 
resolvent operator $A^{-1}$ maps $L^q(\Omega)$ into $L^\infty(\Omega)$
if $q > d$. 
It is a famous result of Stampacchia (\cite{Stam2} Theorem~4.4)
that $A^{-1}$ extends to a continuous operator from $W^{-1,q}(\Omega)$ into $L^\infty(\Omega)$
if $q > d$.
Inspecting the proof, it is becomes
clear that this result extends without difficulties from the pure Dirichlet case 
to the case of mixed boundary conditions,
as long as the Dirichlet part of the boundary is large enough to imply a
Poincar\'e inequality.
It is also possible to extend the result to merely Neumann boundary
conditions if a positive scalar is added to the operator, and hence the resulting 
operator is coercive.
The idea how to do this can be found in the book of Tr\"oltzsch (\cite{Troltzsch}
Section~7.2.2).
What remains open is the pure divergence form operator with
pure Neumann boundary condition. 
It is clear that a `naive' generalisation 
cannot work, since one can add to any solution of such a Neumann problem
an arbitrary constant and again obtains a solution.

The main theorem of this paper is as follows.

\begin{thm} \label{tbounded101}
Let $\Omega \subset \Ri^d$ be a bounded connected open set.
Let $r \in (2,\infty)$ and suppose that $W^{1,2}(\Omega) \subset L^r(\Omega)$.
Let $\mu \colon \Omega \to \Ri^{d \times d}$ be a bounded measurable 
function.
Suppose there exists a $\nu > 0$ such that 
\[
\RRe \sum_{k,\ell=1}^d \mu(x) \, \xi_k \, \overline{\xi_\ell} 
\geq \nu \, |\xi|^2
\]
for all $\xi \in \Ci^d$ and almost all $x \in \Omega$.
Define $\ca \colon W^{1,2}(\Omega) \to (W^{1,2}(\Omega))^*$ by 
\[
\ca(u,v) 
= \int_\Omega \mu \nabla u \cdot \overline{\nabla v}
 .  \]
Let $q \in (d,\infty)$ and suppose that $\frac{1}{2}-\frac{1}{q} > \frac{1}{r}$. 
If $u \in W^{1,2}(\Omega)$
with $\ca u \in (W^{1,p}(\Omega))^*$, where $p$ is the dual exponent of $q$, then $u \in L^\infty(\Omega)$. 

More precisely, for all
$T \in (W^{1,p}(\Omega))^*$ with $T(\one) = 0$ there is a unique
$u \in W^{1,2}(\Omega)$ with $\int_\Omega u = 0$ satisfying $\ca u = T$. 
Moreover, there exists a $c > 0$ independent of $T$ such that
$\|u\|_{L^\infty(\Omega)} \leq c \, \|T\|_{(W^{1,p}(\Omega))^*}$.
\end{thm}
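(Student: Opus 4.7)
The plan combines Lax-Milgram for well-posedness with a De Giorgi-Stampacchia level-set iteration for the $L^\infty$-bound; the Neumann setting forces a Poincar\'e-type inequality for the truncations that is only valid once the super-level sets are sufficiently small. To start, I would set up Lax-Milgram on $V := \{u \in W^{1,2}(\Omega) : \int_\Omega u = 0\}$. The hypothesis $W^{1,2}(\Omega) \subset L^r(\Omega)$ with $r > 2$ should deliver (via Rellich-type compactness) the Poincar\'e-Wirtinger inequality $\|u\|_{L^2(\Omega)} \leq C\|\nabla u\|_{L^2(\Omega)}$ on $V$, and coupled with ellipticity this makes $\ca|_{V \times V}$ coercive. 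Lax-Milgram then yields a unique $u \in V$ with $\ca(u,v) = T(v)$ for $v \in V$, and the identity extends to all of $W^{1,2}(\Omega)$ since $\ca(u,\one) = 0 = T(\one)$. Because $p < 2$, we also have $W^{1,2}(\Omega) \hookrightarrow W^{1,p}(\Omega)$ continuously, so $\|u\|_{W^{1,2}(\Omega)} \leq C' \|T\|_{(W^{1,p}(\Omega))^*}$. Any $u \in W^{1,2}(\Omega)$ with $\ca u \in (W^{1,p}(\Omega))^*$ reduces to this case after subtracting its mean.

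For the $L^\infty$ bound I would reduce to real-valued $u$ by splitting into real and imaginary parts and run a level-set iteration with $v_k := (u - k)^+$ for $k \geq k_0$. Since $p < 2$ and $\Omega$ is bounded, $v_k \in W^{1,2}(\Omega) \subset W^{1,p}(\Omega)$, so $T(v_k)$ is defined. Testing $\ca u = T$ against $v_k$ and invoking coercivity,
\[
\nu \|\nabla v_k\|_{L^2(\Omega)}^2 \leq \RRe T(v_k) \leq \|T\|_{(W^{1,p}(\Omega))^*} \|v_k\|_{W^{1,p}(\Omega)}.
\]
Writing $a(k) := |\{u > k\}|$, H\"older on the support $\{u > k\}$ gives $\|v_k\|_{W^{1,p}(\Omega)} \leq C_1\, a(k)^{1/p - 1/2}\, \|v_k\|_{W^{1,2}(\Omega)}$. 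The main obstacle is the next step: bounding $\|v_k\|_{W^{1,2}(\Omega)}$ by $\|\nabla v_k\|_{L^2(\Omega)}$ alone, since $v_k$ lies neither in $W^{1,2}_0$ nor in the mean-zero subspace. The intended remedy is to exploit smallness of $a(k)$: combining $\|v_k\|_{L^r} \leq C\|v_k\|_{W^{1,2}}$ with the H\"older bound $\|v_k\|_{L^2} \leq a(k)^{1/2 - 1/r}\|v_k\|_{L^r}$ permits absorption once $a(k)$ drops below an explicit threshold depending only on $C$ and $r$, producing $\|v_k\|_{W^{1,2}(\Omega)} \leq C_2 \|\nabla v_k\|_{L^2(\Omega)}$.

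Chaining these estimates yields $\|\nabla v_k\|_{L^2} \leq C_3 \|T\|_{(W^{1,p}(\Omega))^*}\, a(k)^{1/p - 1/2}$ and hence $\|v_k\|_{L^r} \leq C_4 \|T\|_{(W^{1,p}(\Omega))^*}\, a(k)^{1/p - 1/2}$. For $h > k$, the elementary inequality $(h-k)\,a(h)^{1/r} \leq \|v_k\|_{L^r}$ produces the recursion
\[
a(h) \leq C_5\, \|T\|_{(W^{1,p}(\Omega))^*}^r\, (h-k)^{-r}\, a(k)^{\gamma}, \qquad \gamma := r\Bigl(\tfrac12 - \tfrac1q\Bigr),
\]
where $\gamma > 1$ is precisely the hypothesis $\frac12 - \frac1q > \frac1r$. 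Stampacchia's iteration lemma then forces $a(k_0 + d) = 0$ for some $d$ controlled by $\|T\|_{(W^{1,p}(\Omega))^*}$ and $a(k_0)$. Choosing $k_0$ large via Chebyshev, using the $L^2$-bound from the first step to guarantee that $k_0$ scales linearly in $\|T\|_{(W^{1,p}(\Omega))^*}$ while $a(k_0)$ stays below the absorption threshold, yields $u \leq k_0 + d \leq C\|T\|_{(W^{1,p}(\Omega))^*}$ a.e. Applying the same argument to $-u$ gives the matching lower bound, completing the $L^\infty$ estimate with the stated quantitative dependence.
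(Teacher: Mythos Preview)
Your argument is correct and follows the same overall Stampacchia level-set scheme as the paper, but the crucial step---controlling $\|v_k\|_{W^{1,2}}$ by $\|\nabla v_k\|_{L^2}$---is handled differently. The paper proves a Poincar\'e inequality $\|\zeta_k\|_2 \leq \gamma_u\,\|\nabla\zeta_k\|_2$ valid for \emph{all} $k\geq 0$ (Lemma~\ref{lbounded207}), via a compactness/contradiction argument; the constant $\gamma_u$ depends on the solution $u$, so the resulting $L^\infty$ bound (Proposition~\ref{pbounded301}) is not uniform in $u$, and the paper then invokes a closed-graph argument to obtain the uniform constant $c$. Your absorption trick, using $\|v_k\|_{L^2}\leq a(k)^{1/2-1/r}\|v_k\|_{L^r}\leq C\,a(k)^{1/2-1/r}\|v_k\|_{W^{1,2}}$, gives the Poincar\'e bound only for $k\geq k_0$ where $a(k)$ is below an explicit threshold, but that threshold depends only on the Sobolev constant and $r$; coupled with Chebyshev to place $k_0$ proportional to $\|T\|$, this yields the uniform estimate directly, without closed graph. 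A second difference is that the paper first represents $T$ as $\sum_j\int_\Omega f_j\,\overline{\partial_j v}$ with $f_j\in L^q(\Omega)$ (Proposition~\ref{pbounded204}) and then applies H\"older to $\int_{A_k} f_j\,\partial_j\zeta_k$, whereas you pair $T$ with $v_k$ directly and H\"older the $W^{1,p}$-norm of $v_k$. Your route is more elementary and explicitly quantitative; the paper's route isolates the Poincar\'e-for-truncations phenomenon as a standalone lemma of possibly independent interest.
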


We emphasise that the Sobolev embedding $W^{1,2}(\Omega) \subset L^r(\Omega)$ assumption is
very weak.
If $2^*$ is the first Sobolev exponent, that is
$\frac{1}{2^*} = \frac{1}{2} - \frac{1}{d}$, then it follows from 
scaling that $r \leq 2^*$.
The assumption $\frac{1}{2}-\frac{1}{q} > \frac{1}{r}$ implies that $q > d$.
It is well known that there is a connection between Sobolev embeddings and
solvability of Neumann problems.
We exemplarily refer to Maz'ya and
Poborchi{\u{\i}}~\cite{MazyaPoborchii1,MazyaPoborchii2}
and~\cite{MazED2}, Section~6.10.

If $d \geq 3$, then the optimal case in our assumption is $r = 2^*$, the first Sobolev exponent.
Then the condition $\frac{1}{2}-\frac{1}{q} > \frac{1}{r}$ is merely the 
condition $q > d$, as in the Stampacchia theorem for the Dirichlet boundary condition.
This optimal assumption is satisfied for example
by any open bounded set which is the finite union of connected $W^{1,2}$-extension domains,
such as for example Lipschitz domains.
Another example is that of a connected John domain (\cite{Bojarski}, Section~6).
If the domain has cusps, then the
full Sobolev embedding is usually not available, but the embedding 
$W^{1,2}(\Omega) \subset L^r(\Omega)$ still holds for some $r \in (2,2^*)$ if the cusps are of polynomial
type by~\cite{AF}, Theorem~4.51.
We also refer to Maz'ya~\cite{MazED2}, Section~6.9 for more
geometric conditions.
It is also known that the embedding cannot
hold true for any $r > 2$ if the boundary of $\Omega$ has cusps of exponential
sharpness, see~\cite{AF} Theorem~4.48.
Note that in the case of Dirichlet boundary
conditions, one always has the optimal embeddings
$W^{1,2}_0(\Omega) \subset L^{2^*}(\Omega)$ if $d \geq 3$,
and $W^{1,2}_0(\Omega) \subset L^r(\Omega)$ for all $r \in (2,\infty)$ if $d=2$.

The proof of Theorem~\ref{tbounded101} follows the ideas of Stampacchia and
uses truncations of Sobolev functions.
It relies on the Stampacchia lemma (\cite{KinS}
Chapter~II, Appendix~B, Lemma~2.1) and at its heart lies a uniform estimation of the 
Poincar\'{e} constants of the truncations of mean value free Sobolev functions,
Lemma~\ref{lbounded207} below.

We also prove that the pure Neumann operator $\ca$ admits optimal Sobolev regularity in the
setting of Theorem~\ref{tbounded101} for $q$ sufficiently close to~$2$.
This means that the
domain of the part of the operator
$\ca$ in $(W^{1,p}(\Omega))^*$ coincides with $W^{1,q}_\perp(\Omega)$, the mean
value free functions in $W^{1,q}(\Omega)$, where again $p$ is the dual exponent to $q$.
The result relies
on interpolation and the {\v{S}}ne{\u{\i}}berg stability theorem.
We refer to Theorem~\ref{tbounded402} below.

The outline of this paper is as follows.
 In Section~\ref{boundedS2} we show that a Sobolev
embedding implies a Poincar\'e inequality on any $L^p$-space.
 We use this in
Section~\ref{boundedS3} to adapt the argument of Stampacchia to deduce the boundedness as
stated in Theorem~\ref{tbounded101}.
In Section~\ref{boundedS4} we derive optimal Sobolev
regularity results for $\ca$ and some consequences of these based on the results in Section~\ref{boundedS2}.

\smallskip

We conclude with an example.
We formally attach the following boundary value problem to the equation $\ca u = T$ with 
$T \in (W^{1,p}(\Omega))^*$
as in Theorem~\ref{tbounded101}:
\begin{eqnarray*}
  - \divv(\mu \nabla u) & = & f \quad \text{in}~\Omega, \\*
  - n \cdot \mu\nabla u & = & g \quad \text{on}~\partial\Omega,
\end{eqnarray*}
where $f \in L^s(\Omega)$ and
$g \in L^t(\partial\Omega;\mathcal{H}_{d-1})$ for appropriate
values of $s$ and $t$, where $n$ is the normal. 
Since $T$ is only supposed to be a functional on $W^{1,p}(\Omega)$,
inhomogeneous boundary data is allowed. For the foregoing
boundary value problem, $T$ takes the form
\begin{equation*}
  T(v) = \int_\Omega f\,\overline{v} 
  + \int_{\partial\Omega} g\,\overline{\tau v} \, \mathrm{d}\mathcal{H}_{d-1},
\end{equation*}
where $\tau$ is the trace operator onto $\partial\Omega$. If the
domain $\Omega$ is sufficiently regular to allow the
application of the divergence theorem and to admit a suitable
trace operator, this
formulation and its connection to $\ca u = T$ can be made
rigorous, see Ciarlet (\cite{Cia},
Chapter~1.2) or~\cite{GGZ}, Chapter~2.2. A particular case would
be that of a Lipschitz graph domain $\Omega$.

\section{Sobolev and Poincar\'e} \label{boundedS2}

We first show that a Sobolev type embedding extrapolates 
to compactness of the inclusion map $W^{1,p}(\Omega) \subset L^p(\Omega)$.

\begin{lemma} \label{lbounded201}
Let $\Omega \subset \Ri^d$ be open and bounded.
Let $q \in (1,\infty)$ and suppose there exists a $\delta > 0$ such that 
$W^{1,q}(\Omega) \subset L^{q+\delta}(\Omega)$.
Let $p \in (1,\infty)$.
Then the inclusion $W^{1,p}(\Omega) \subset L^p(\Omega)$ is compact.
Moreover, there exists a $\delta' > 0$ such that 
$W^{1,p}(\Omega) \subset L^{p+\delta'}(\Omega)$.
\end{lemma}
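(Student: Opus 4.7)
The plan is to first establish the improved Sobolev embedding $W^{1,p}(\Omega) \subset L^{p+\delta'}(\Omega)$, and then deduce the compactness of $W^{1,p}(\Omega) \subset L^p(\Omega)$ from it. For $p \geq q$, the embedding follows from a power-law chain rule: given $u \in W^{1,p}(\Omega)$, set $v := (\sgn u)\,|u|^{p/q}$. Since $p/q \geq 1$, one has $v \in W^{1,1}_{\loc}$ with $|\nabla v| = (p/q)\,|u|^{(p-q)/q}\,|\nabla u|$, and H\"older's inequality with exponents $p/(p-q)$ and $p/q$ produces
\[
\|\nabla v\|_q \leq (p/q)\,\|u\|_p^{(p-q)/q}\,\|\nabla u\|_p.
\]
Combined with $\|v\|_q = \|u\|_p^{p/q}$, this yields $v \in W^{1,q}(\Omega)$, so the hypothesis gives $v \in L^{q+\delta}(\Omega)$, which is just $u \in L^{p+p\delta/q}(\Omega)$.

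The case $p < q$ is the main obstacle, because the same substitution with $p/q < 1$ would introduce a non-integrable singularity at $\{u = 0\}$. My strategy here is to combine the family of embeddings $W^{1,s}(\Omega) \subset L^{s+s\delta/q}(\Omega)$ for $s \geq q$, just established, with the trivial embedding $W^{1,1}(\Omega) \subset L^1(\Omega)$, and interpolate. Concretely, a K-functional decomposition $u = T_M(u) + (u - T_M(u))$ at truncation level $M > 0$ balances a bounded piece, controlled in an intermediate norm, against a tail term; an optimisation over $M$ yields the desired continuity of the inclusion $W^{1,p}(\Omega) \subset L^{p+\delta'}(\Omega)$ for a suitably small $\delta' > 0$ and every $p \in (1,q)$. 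Justifying that the real interpolation of the relevant Sobolev and Lebesgue scales behaves well on $\Omega$ in this generality is the delicate point; the {\v{S}}ne{\u{\i}}berg stability argument used in Section~\ref{boundedS4} is a manifestation of the same circle of ideas.

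Once the improved embedding $W^{1,p}(\Omega) \subset L^{p+\delta'}(\Omega)$ is known for every $p \in (1,\infty)$, the compactness of $W^{1,p}(\Omega) \subset L^p(\Omega)$ follows by the standard combination of local compactness and equi-integrability. Given $(u_n)$ bounded in $W^{1,p}(\Omega)$, a countable exhaustion of $\Omega$ by open balls compactly contained in $\Omega$ together with the classical Rellich--Kondrachov theorem yields, by diagonalisation, a subsequence converging almost everywhere in $\Omega$. The uniform $L^{p+\delta'}$-bound on $(u_n)$ gives equi-integrability of $(|u_n|^p)$ via de la Vall\'ee-Poussin, and Vitali's convergence theorem then upgrades the almost-everywhere convergence to convergence in $L^p(\Omega)$.
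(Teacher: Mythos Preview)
Your argument for $p \geq q$ via the substitution $v = (\sgn u)\,|u|^{p/q}$ is a valid alternative to the paper's interpolation approach (modulo the routine justification of the chain rule for this power), and your compactness argument from the improved embedding (local Rellich--Kondrachov on interior balls plus Vitali via the uniform $L^{p+\delta'}$ bound) is correct and is essentially the content of the reference \cite{Daners7}, Lemma~7.1, that the paper cites.

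The genuine gap is the case $p < q$. You correctly identify that one must interpolate between $W^{1,1}(\Omega) \subset L^1(\Omega)$ and $W^{1,s}(\Omega) \subset L^{s+s\delta/q}(\Omega)$ for some $s \geq q$, but the proposed decomposition $u = T_M(u) + (u - T_M(u))$ does not realise the $K$-functional for the Sobolev couple: while $T_M(u) \in L^\infty$, one has $\nabla T_M(u) = \one_{[|u|<M]}\,\nabla u$, which lies only in $L^p$, so $T_M(u)$ is not in $W^{1,s}$ (let alone $W^{1,\infty}$) in general. Hence no optimisation over $M$ can produce the claimed $L^{p+\delta'}$ bound. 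The reference to {\v{S}}ne{\u{\i}}berg is also not to the point: that theorem transfers isomorphism properties along an interpolation scale that is \emph{already known}; it does not identify the scale. The paper fills exactly this gap by invoking Liu--Tai \cite{LiuTai}, Theorem~9, which gives
\[
\bigl(W^{1,1}(\Omega),W^{1,\infty}(\Omega)\bigr)_{1-\frac{1}{t},t} = W^{1,t}(\Omega)
\]
for \emph{every} bounded open $\Omega$, with no boundary regularity assumed. Reiteration then yields $[W^{1,r}(\Omega),W^{1,q}(\Omega)]_\theta = W^{1,p}(\Omega)$ for any $r \in (1,p)$, and interpolating the inclusions $W^{1,r}(\Omega)\subset L^r(\Omega)$ and $W^{1,q}(\Omega)\subset L^{q+\delta}(\Omega)$ gives $W^{1,p}(\Omega)\subset L^s(\Omega)$ with $s>p$. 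This non-trivial identification of Sobolev interpolation spaces on irregular domains is precisely the ingredient your sketch acknowledges as ``delicate'' but does not supply.
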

\begin{proof}
We show that there exists an $s > p$ such that 
$W^{1,p}(\Omega) \subset L^s(\Omega)$. 
Then the compactness of the inclusion $W^{1,p}(\Omega) \subset L^p(\Omega)$ 
follows as in~\cite{Daners7} Lemma~7.1.
Suppose that $p \in (1,q)$ (the case $p \in (q,\infty)$ is similar).
Fix $r \in (1,p)$.
It follows from Liu--Tai~\cite{LiuTai} Theorem~9 that the real interpolation space
$(W^{1,1}(\Omega),W^{1,\infty}(\Omega))_{1 - \frac{1}{t},t} = W^{1,t}(\Omega)$
for all $t \in (1,\infty)$. 
Here $W^{1,\infty}(\Omega)$ is the Sobolev space of all
$L^\infty(\Omega)$ functions whose weak partial derivatives are also $L^\infty(\Omega)$ functions.
Let $\theta \in (0,1)$ be such that $\frac{1}{p} = \frac{1-\theta}{r} + \frac{\theta}{q}$.
Then by complex interpolation
\begin{eqnarray}
\bigl[W^{1,r}(\Omega),W^{1,q}(\Omega)\bigr]_\theta
& = & \Bigl[\bigl(W^{1,1}(\Omega),W^{1,\infty}(\Omega)\bigr)_{1 - \frac{1}{r},r},
       \bigl(W^{1,1}(\Omega),W^{1,\infty}(\Omega)\bigr)_{1 - \frac{1}{q},q}\Bigr]_\theta \label{ebounded1} \\
& = & \bigl(W^{1,1}(\Omega),W^{1,\infty}(\Omega)\bigr)_{1 - \frac{1}{p},p}
= W^{1,p}(\Omega) \notag
,
\end{eqnarray}
where we used the reiteration theorem~\cite{BL} Theorem~4.7.2 in the second step.
The inclusions $W^{1,r}(\Omega) \to L^r(\Omega)$ and 
$W^{1,q}(\Omega) \to L^{q + \delta}(\Omega)$ are continuous.
Hence by complex interpolation one deduces that 
$W^{1,p}(\Omega) \subset L^s(\Omega)$, where 
$\frac{1}{s} 
= \frac{1-\theta}{r} + \frac{\theta}{q+\delta} 
< \frac{1-\theta}{r} + \frac{\theta}{q}
= \frac{1}{p}$.
Note that $s > p$ as required.
\end{proof}

Arguing as in Ziemer~\cite{Zie2} Theorem~4.4.2 one obtains a Poincar\'e inequality from the
compact inclusion $W^{1,p}(\Omega) \subset L^p(\Omega)$.

\begin{prop} \label{pbounded202}
Let $\Omega \subset \Ri^d$ be open, bounded and connected.
Let $p \in (1,\infty)$ and suppose that the inclusion $W^{1,p}(\Omega) \subset L^p(\Omega)$
is compact.
Let $\Omega_0 \subset \Omega$ be measurable and suppose that the Lebesgue measure 
$|\Omega_0| > 0$.
Then there exists a $c > 0$ such that 
\[
\|u\|_p \leq c \, \|\nabla u\|_{p}
\]
for all $u \in W^{1,p}(\Omega)$ with $\int_{\Omega_0} u = 0$.
\end{prop}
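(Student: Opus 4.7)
I would prove this by the classical compactness-contradiction argument.

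Suppose, for contradiction, that no such constant $c$ exists. Then for every $n \in \Ni$ there is a function $u_n \in W^{1,p}(\Omega)$ with $\int_{\Omega_0} u_n = 0$ and $\|u_n\|_p > n \, \|\nabla u_n\|_p$. After normalising, set $v_n = u_n / \|u_n\|_p$, so that $\|v_n\|_p = 1$, $\|\nabla v_n\|_p < \frac{1}{n}$, and $\int_{\Omega_0} v_n = 0$. In particular the sequence $(v_n)$ is bounded in $W^{1,p}(\Omega)$.

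By the compact inclusion $W^{1,p}(\Omega) \subset L^p(\Omega)$, passing to a subsequence I may assume that $v_n \to v$ in $L^p(\Omega)$ for some $v \in L^p(\Omega)$. The gradients $\nabla v_n$ converge to zero in $L^p(\Omega; \Ri^d)$, so for every test function $\varphi \in C_c^\infty(\Omega)$ one has $\int_\Omega v_n \, \partial_k \varphi \to \int_\Omega v \, \partial_k \varphi$ and simultaneously $\int_\Omega v_n \, \partial_k \varphi = -\int_\Omega (\partial_k v_n) \, \varphi \to 0$. Hence $v \in W^{1,p}(\Omega)$ with $\nabla v = 0$ in the distributional sense. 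Since $\Omega$ is connected, $v$ is (almost everywhere equal to) a constant.

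It remains to rule out a nonzero constant. On the one hand $\|v\|_p = \lim_n \|v_n\|_p = 1$, so $v \neq 0$. On the other hand $L^p$-convergence implies $L^1$-convergence on the bounded set $\Omega_0$, and hence $\int_{\Omega_0} v = \lim_n \int_{\Omega_0} v_n = 0$. Because $v$ is constant and $|\Omega_0| > 0$, this forces $v = 0$, the desired contradiction.

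The argument is entirely standard; the only step requiring a moment's thought is the identification $\nabla v = 0$, which is obtained by passing to the limit in the defining integration-by-parts identity for weak derivatives, using that $\nabla v_n \to 0$ strongly in $L^p$ while $v_n \to v$ strongly in $L^p$. Connectedness of $\Omega$ is used to upgrade $\nabla v = 0$ to $v$ being a single constant, rather than a locally constant function.
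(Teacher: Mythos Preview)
Your proof is correct and follows essentially the same compactness--contradiction argument as the paper. The only cosmetic difference is that the paper extracts a weakly convergent subsequence in $W^{1,p}(\Omega)$ and uses weak lower semicontinuity of the norm to conclude $\nabla v = 0$, whereas you pass directly to an $L^p$-limit and verify $\nabla v = 0$ via integration by parts; both routes are standard and equivalent here.
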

\begin{proof}
Suppose not.
Then for all $n \in \Ni$ there exists a $u_n \in W^{1,p}(\Omega)$ such that 
$\|u_n\|_p > n \, \|\nabla u_n\|_p$ and $\int_{\Omega_0} u_n = 0$.
Without loss of generality $\|u_n\|_p = 1$ for all $n \in \Ni$.
Then $\|\nabla u_n\|_p \leq \frac{1}{n}$.
So the sequence $(u_n)_{n \in \Ni}$ is bounded in $W^{1,p}(\Omega)$.
Passing to a subsequence if necessary there exists a $u \in W^{1,p}(\Omega)$
such that $\lim u_n = u$ weakly in $W^{1,p}(\Omega)$.
Then $\lim u_n = u$ strongly in $L^p(\Omega)$ and $\int_{\Omega_0} u = 0$.
Moreover $\|u\|_p = 1$ and $u \neq 0$.
Next $\|\nabla u\|_p \leq \liminf_{n \to \infty} \|\nabla u_n\|_p = 0$.
Since $\Omega$ is connected it follows that $u$ is constant by \cite{Zie2} Corollary~2.1.9.
Because $\int_{\Omega_0} u = 0$ and $|\Omega_0| > 0$ one deduces that $u = 0$.
This is a contradiction.
\end{proof}

If $\Omega \subset \Ri^d$ is a bounded open set
and $p \in (1,\infty)$, then we define 
\[
W^{1,p}_\perp(\Omega)
= \Bigl\{ u \in W^{1,p}(\Omega) \colon \int_\Omega u = 0 \Bigr\} 
 .  \]
It follows from Proposition~\ref{pbounded202} that 
$W^{1,p}_\perp(\Omega)$ equipped with the norm
$u \mapsto \|\nabla u\|_p$ is a Banach space.

\begin{cor} \label{cbounded203}
Let $\Omega \subset \Ri^d$ be open, bounded and connected.
Let $p \in (1,\infty)$ and suppose that the inclusion $W^{1,p}(\Omega) \subset L^p(\Omega)$
is compact.
Define $\altnorm{{}\cdot{}} \colon W^{1,p}(\Omega) \to [0,\infty)$ by 
$\altnorm{u} = \|\nabla u\|_p + \big| \int_\Omega u \big|$.
Then one has the following.
\begin{tabel} 
\item \label{cbounded203-1}
The function $\altnorm{{}\cdot{}}$ is a norm on $W^{1,p}(\Omega)$ which is equivalent
to $\|\cdot\|_{W^{1,p}(\Omega)}$.
\item \label{cbounded203-2}
  The map
  \begin{equation*}
    P \colon u \mapsto  u - \tfrac{1}{|\Omega|} \int_\Omega u 
  \end{equation*}
  is a projection from $W^{1,p}(\Omega)$ onto $W^{1,p}_\perp(\Omega)$. In particular,
\[
u \mapsto \Big(\tfrac{1}{|\Omega|} \int_\Omega u, u - \tfrac{1}{|\Omega|} \int_\Omega u
\Big)
\]
is a topological isomorphism from $W^{1,p}(\Omega)$ onto 
$\Ci \oplus W^{1,p}_\perp(\Omega)$.
\end{tabel}
\end{cor}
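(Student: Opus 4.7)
My plan is to reduce both parts of the corollary to the Poincar\'e inequality of Proposition~\ref{pbounded202}, applied with $\Omega_0 = \Omega$ to the mean-centred function $u - \bar u$, where $\bar u := \tfrac{1}{|\Omega|}\int_\Omega u$. That inequality reads $\|u - \bar u\|_p \le c\,\|\nabla u\|_p$, which is exactly what is needed to trade full $L^p$-control of $u$ for the lighter quantities $\|\nabla u\|_p$ and $|\int_\Omega u|$.

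For part~\ref{cbounded203-1}, I would first check that $\altnorm{{}\cdot{}}$ is a seminorm (clear from subadditivity of $\|\cdot\|_p$ and of $|\cdot|$) and that it separates points: if $\altnorm{u}=0$, then $\|\nabla u\|_p = 0$, so $u$ is constant by connectedness of $\Omega$ (\cite{Zie2}~Corollary~2.1.9), and $\int_\Omega u = 0$ then forces that constant to be zero. One direction of the equivalence is immediate from H\"older's inequality, $|\int_\Omega u| \le |\Omega|^{1/p'}\,\|u\|_p$, giving $\altnorm{u} \le c_1\,\|u\|_{W^{1,p}(\Omega)}$. The reverse direction is the Poincar\'e step: writing $u = (u - \bar u) + \bar u$, Proposition~\ref{pbounded202} supplies $\|u - \bar u\|_p \le c\,\|\nabla u\|_p$, while $\|\bar u\|_p = |\Omega|^{1/p - 1}\,|\int_\Omega u|$ by direct computation, and the triangle inequality then yields $\|u\|_{W^{1,p}(\Omega)} \le c_2\,\altnorm{u}$.

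For part~\ref{cbounded203-2}, linearity of $P$ is clear, and $P^2 = P$ follows from $\int_\Omega Pu = 0$, which also places $Pu$ in $W^{1,p}_\perp(\Omega)$; surjectivity of $P$ onto $W^{1,p}_\perp(\Omega)$ is immediate since $Pv = v$ whenever $v \in W^{1,p}_\perp(\Omega)$. Continuity of $P$ with respect to $\|\cdot\|_{W^{1,p}(\Omega)}$ follows from boundedness of the functional $u \mapsto \int_\Omega u$ together with part~\ref{cbounded203-1}. The isomorphism claim is then the standard splitting: the map $u \mapsto (\tfrac{1}{|\Omega|}\int_\Omega u,\, Pu)$ is linear, continuous and bijective, with inverse $(\alpha,v) \mapsto \alpha + v$, which is manifestly continuous on $\Ci \oplus W^{1,p}_\perp(\Omega)$.

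There is no genuine obstacle beyond Proposition~\ref{pbounded202}; everything else is bookkeeping around that inequality. The only item worth pausing on is that $W^{1,p}_\perp(\Omega)$ is indeed a Banach space in its own right (so that the target $\Ci \oplus W^{1,p}_\perp(\Omega)$ is a well-defined direct sum of Banach spaces), but this is already noted in the remark immediately preceding the corollary and so requires no additional argument here.
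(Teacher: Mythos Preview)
Your proposal is correct and follows essentially the same approach as the paper: both reduce to Proposition~\ref{pbounded202} applied with $\Omega_0=\Omega$ to the mean-centred function $u-\bar u$, deriving $\|u\|_p \leq c\,\|\nabla u\|_p + |\Omega|^{1/p-1}\,|\int_\Omega u|$ via the triangle inequality, from which everything else is routine. Your write-up is more explicit about the norm axioms and the projection properties than the paper's terse ``and the lemma follows easily'', but there is no difference in substance.
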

\begin{proof}
By Proposition~\ref{pbounded202}
there exists a $c > 0$ such that 
$\|u\|_p \leq c \, \|\nabla u\|_p$ for all $u \in W^{1,p}_\perp(\Omega)$.
If $u \in W^{1,p}(\Omega)$, then 
\begin{eqnarray*}
\|u\|_p
& \leq & \bigl\|u - \tfrac{1}{|\Omega|} \int_\Omega u\bigr\|_p
   + \bigl\|\tfrac{1}{|\Omega|} \int_\Omega u\bigr\|_p  \\
& \leq & c \, \bigl\|\nabla \Big( u - \tfrac{1}{|\Omega|} \int_\Omega u \Big) \bigr\|_p
   + |\Omega|^{-1+\frac{1}{p}} \Big| \int_\Omega u \Big|  \\
& = & c \, \|\nabla u\|_p + |\Omega|^{-1+\frac{1}{p}} \Big| \int_\Omega u \Big| 
\end{eqnarray*}
and the lemma follows easily.
\end{proof}

\begin{prop} \label{pbounded204}
Let $\Omega \subset \Ri^d$ be open, bounded and connected.
Let $p \in (1,\infty)$ and suppose that the inclusion $W^{1,p}(\Omega) \subset L^p(\Omega)$
is compact.
Then for all $T \in (W^{1,p}(\Omega))^*$ there exist $\kappa \in \Ci$ and 
$f_1,\ldots,f_d \in L^q(\Omega)$ such that 
\[
\langle T,u \rangle_{(W^{1,p}(\Omega))^* \times W^{1,p}(\Omega)}
= \kappa \int_\Omega \overline u + \sum_{j=1}^d \int_\Omega f_j \, \overline{\partial_j u}
\]
for all $u \in W^{1,p}(\Omega)$, where $q$ is the dual exponent of~$p$.
\end{prop}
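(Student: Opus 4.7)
The plan is to use the topological splitting $W^{1,p}(\Omega) = \Ci \cdot \one \oplus W^{1,p}_\perp(\Omega)$ provided by Corollary~\ref{cbounded203} and handle the two summands separately. On the constants, the restriction of $T$ is determined by a single scalar, which will become $\kappa$. On the mean-value-free summand, the Poincar\'e inequality identifies $W^{1,p}_\perp(\Omega)$ with a closed subspace of $L^p(\Omega)^d$ via the gradient, after which the $f_j$'s are produced by Hahn--Banach together with the Riesz representation theorem for $L^q$ as the dual of $L^p$.

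More concretely, I would first fix $\kappa \in \Ci$ by the requirement that $u \mapsto \kappa \int_\Omega \overline{u}$ agrees with $T$ on the constants, that is, $\kappa = |\Omega|^{-1}\,\langle T,\one\rangle$ (with the convention forced on $\kappa$ by the conjugate on the right-hand side). Next, I consider the gradient map
\[
\nabla \colon W^{1,p}_\perp(\Omega) \longrightarrow L^p(\Omega)^d, \qquad u \mapsto (\partial_1 u,\ldots,\partial_d u).
\]
By Proposition~\ref{pbounded202} this map is a bounded injection with bounded inverse on its range, so $\nabla\bigl(W^{1,p}_\perp(\Omega)\bigr)$ is a closed subspace of $L^p(\Omega)^d$. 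On this closed subspace define $S(\nabla u) = \langle T,u\rangle$; this is well defined by injectivity of $\nabla$ on $W^{1,p}_\perp(\Omega)$ and continuous by the Poincar\'e inequality. Hahn--Banach extends $S$ to a continuous functional $\widetilde S$ on all of $L^p(\Omega)^d$, and componentwise application of the Riesz representation theorem (using $(L^p(\Omega))^* \cong L^q(\Omega)$ for $p \in (1,\infty)$) yields $f_1,\ldots,f_d \in L^q(\Omega)$ with
\[
\widetilde S(g_1,\ldots,g_d) = \sum_{j=1}^d \int_\Omega f_j\,\overline{g_j}
\qquad\text{for all } (g_j) \in L^p(\Omega)^d.
\]
Specialising to $(g_j) = (\partial_j u)$ gives $\langle T,u\rangle = \sum_j \int_\Omega f_j\,\overline{\partial_j u}$ for every $u \in W^{1,p}_\perp(\Omega)$.

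To finish, I take an arbitrary $u \in W^{1,p}(\Omega)$ and use the projection $P$ from Corollary~\ref{cbounded203} to write $u = |\Omega|^{-1}\int_\Omega u + Pu$ with $Pu \in W^{1,p}_\perp(\Omega)$ and $\nabla(Pu) = \nabla u$. Applying $T$ and combining the two partial identifications (the constant-part contributes $\kappa \int_\Omega \overline{u}$, the mean-value-free part contributes $\sum_j \int_\Omega f_j\,\overline{\partial_j u}$) yields the required representation on all of $W^{1,p}(\Omega)$.

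I do not anticipate a genuine obstacle: the proof is a routine assembly of the Poincar\'e inequality (to realise the gradient as an isomorphism onto a closed subspace of $L^p(\Omega)^d$), Hahn--Banach (to pass from that subspace to the whole of $L^p(\Omega)^d$) and $L^p$--$L^q$ duality. The only bookkeeping subtlety is keeping the conjugation conventions consistent throughout, which, since $T$ must be conjugate-linear for the right-hand side of the representation to make sense, is precisely what pins down the definition of $\kappa$.
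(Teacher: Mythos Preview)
Your proof is correct and follows essentially the same route as the paper: reduce via the splitting of Corollary~\ref{cbounded203}\ref{cbounded203-2} to representing functionals on $W^{1,p}_\perp(\Omega)$, push these through the gradient map to a subspace of $L^p(\Omega)^d$, and apply Hahn--Banach followed by $L^p$--$L^q$ duality. You supply more detail than the paper (which leaves the final steps as ``straightforward''), and your observation that the range of $\nabla$ is closed is true but not actually needed, since Hahn--Banach applies to arbitrary subspaces.
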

\begin{proof}
Using Corollary~\ref{cbounded203}\ref{cbounded203-2}
it suffices to show that for all $S \in (W^{1,p}_\perp(\Omega))^*$
there exist $f_1,\ldots,f_d \in L^q(\Omega)$ such that 
\[
\langle S,u \rangle_{(W^{1,p}_\perp(\Omega))^* \times W^{1,p}_\perp(\Omega)}
= \sum_{j=1}^d \int_\Omega f_j \, \overline{\partial_j u}
\]
for all $u \in W^{1,p}_\perp(\Omega)$, where $u \mapsto \|\nabla u\|_p$ is 
the norm on $W^{1,p}_\perp(\Omega)$.
Consider the subspace $M = \{ \nabla u \colon u \in W^{1,p}_\perp(\Omega) \} $
in $L^p(\Omega)^d$.
Define $F \colon M \to \Ci$
by $F(\nabla u) = S u$.
Then $F$ is well-defined and continuous.
Therefore by Hahn--Banach there exists an extension 
$\widetilde F \in (L^p(\Omega)^d)^*$ of $F$.
The rest of the proof is straight forward.
\end{proof}

\section{Proof of Theorem~\ref{tbounded101}} \label{boundedS3}

In this section we prove Theorem~\ref{tbounded101}.
Let $\Omega \subset \Ri^d$ be a bounded connected open set.
Let $\mu \colon \Omega \to \Ri^{d \times d}$ be a bounded measurable 
function.
We suppose that $\mu$ is {\bf elliptic}, that is there exists a $\nu > 0$ such that 
\[
\RRe \sum_{k,\ell=1}^d \mu(x) \, \xi_k \, \overline{\xi_\ell} 
\geq \nu \, |\xi|^2
\]
for all $\xi \in \Ci^d$ and almost all $x \in \Omega$. 
Let $r > 2$ and suppose that $W^{1,2}(\Omega) \subset L^r(\Omega)$.

Define $\ca \colon W^{1,2}(\Omega) \to (W^{1,2}(\Omega))^*$ by 
\[
\ca(u,v) 
= \int_\Omega \mu \nabla u \cdot \overline{\nabla v}
 .  \]
Recall that 
$W^{1,p}_\perp(\Omega) = \bigl\{ u \in W^{1,p}(\Omega) \colon \int_\Omega u = 0 \bigr\} $
for all $p \in (1,\infty)$.
If $q \in (1,\infty)$ then we define 
\[
W^{-1,q}_\emptyset(\Omega) 
= \bigl(W^{1,p}(\Omega)\bigr)^*
,  \]
where $p$ is the dual exponent of~$q$.
Moreover, we define 
\[
W^{-1,q}_\perp(\Omega) = \bigl\{ T \in W^{-1,q}_\emptyset(\Omega) \colon T(\one) = 0 \bigr\} 
 .  \]
Clearly $\ca u \in W^{-1,2}_\perp(\Omega)$ for all $u \in W^{1,2}(\Omega)$
and $\ker \ca = \Ci \one$ since $\Omega$ is connected.
Define $\ca_\perp \colon W^{1,2}_\perp(\Omega) \to W^{-1,2}_\perp(\Omega)$ by 
$\ca_\perp u = \ca u$.
Then $\ca_\perp$ is injective. 
We next show that it is also surjective and
$W^{-1,2}_\perp(\Omega) = (W^{1,2}_\perp(\Omega))^*$, up to isomorphy.

\begin{prop} \label{pbounded205}
The map $\ca_\perp$ is a topological isomorphism.
\end{prop}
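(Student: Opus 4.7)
The plan is to reduce to a routine application of the Lax--Milgram theorem on the Hilbert space $W^{1,2}_\perp(\Omega)$ equipped with the gradient norm $u \mapsto \|\nabla u\|_2$. First I would invoke Lemma~\ref{lbounded201} with $q=p=2$ and $\delta = r-2 > 0$, which by hypothesis gives that the inclusion $W^{1,2}(\Omega) \subset L^2(\Omega)$ is compact. Consequently Proposition~\ref{pbounded202} yields the Poincar\'e inequality $\|u\|_2 \leq c\,\|\nabla u\|_2$ for all $u \in W^{1,2}_\perp(\Omega)$, and Corollary~\ref{cbounded203} shows that $\|\nabla\cdot\|_2$ is an equivalent norm on $W^{1,2}_\perp(\Omega)$ and that the projection $P u = u - \tfrac{1}{|\Omega|}\int_\Omega u$ realises $W^{1,2}(\Omega)$ as $\Ci \one \oplus W^{1,2}_\perp(\Omega)$.

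The next step is to identify $W^{-1,2}_\perp(\Omega)$ with the dual of $W^{1,2}_\perp(\Omega)$. In one direction, restriction sends $T \in W^{-1,2}_\perp(\Omega)$ to an element of $(W^{1,2}_\perp(\Omega))^*$, and this restriction is injective because any $v \in W^{1,2}(\Omega)$ decomposes as $v = \tfrac{1}{|\Omega|}(\int_\Omega v)\,\one + Pv$ and $T$ kills the constant part. Conversely, given $S \in (W^{1,2}_\perp(\Omega))^*$, the composition $S \circ P$ lies in $(W^{1,2}(\Omega))^*$ and vanishes at~$\one$, so it belongs to $W^{-1,2}_\perp(\Omega)$. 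Both maps are bounded by the topological isomorphism in Corollary~\ref{cbounded203}\ref{cbounded203-2}.

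On $W^{1,2}_\perp(\Omega)$ the form $\ca$ is bounded, since $|\ca(u,v)| \leq \|\mu\|_\infty \|\nabla u\|_2 \|\nabla v\|_2$, and coercive, since the ellipticity assumption gives $\RRe \ca(u,u) \geq \nu \|\nabla u\|_2^2$. The Lax--Milgram theorem then provides, for every $T \in W^{-1,2}_\perp(\Omega)$, a unique $u \in W^{1,2}_\perp(\Omega)$ satisfying $\ca(u,v) = T(v)$ for all $v \in W^{1,2}_\perp(\Omega)$, with the two-sided bound $\nu \|\nabla u\|_2 \leq \|T\|_{W^{-1,2}_\perp} \leq \|\mu\|_\infty \|\nabla u\|_2$.

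Finally I would promote this identity from test functions in $W^{1,2}_\perp(\Omega)$ to test functions in $W^{1,2}(\Omega)$: writing an arbitrary $v \in W^{1,2}(\Omega)$ as $v = \tfrac{1}{|\Omega|}(\int_\Omega v)\,\one + Pv$, both $(\ca u)(v)$ and $T(v)$ depend only on $Pv$, because $\nabla \one = 0$ and $T(\one) = 0$. Hence $\ca_\perp u = T$ as elements of $W^{-1,2}_\emptyset(\Omega)$, and the Lax--Milgram estimate yields that $\ca_\perp$ is a topological isomorphism. I do not anticipate any real obstacle here; the only point that deserves care is the identification of $W^{-1,2}_\perp(\Omega)$ with $(W^{1,2}_\perp(\Omega))^*$, which is exactly where Corollary~\ref{cbounded203} is used.
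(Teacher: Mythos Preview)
Your proposal is correct and follows essentially the same route as the paper: reduce to Lax--Milgram on $W^{1,2}_\perp(\Omega)$ via the Poincar\'e inequality from Lemma~\ref{lbounded201} and Proposition~\ref{pbounded202}, then lift the identity $\ca(u,v)=T(v)$ from test functions in $W^{1,2}_\perp(\Omega)$ to all of $W^{1,2}(\Omega)$ using the decomposition in Corollary~\ref{cbounded203}\ref{cbounded203-2}. The paper does not state the isomorphism $W^{-1,2}_\perp(\Omega)\cong (W^{1,2}_\perp(\Omega))^*$ separately but performs the same restriction-and-extension step inline; your explicit identification is a minor expository difference, not a different argument.
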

\begin{proof}
Define the form $\gotb \colon W^{1,2}_\perp(\Omega) \times W^{1,2}_\perp(\Omega) \to \Ci$
by 
\[
\gotb(u,v) = \int_\Omega \mu \nabla u \cdot \overline{\nabla v}
 .  \]
Then $\gotb$ is a continuous coercive sesquilinear form
by Lemma~\ref{lbounded201} and Proposition~\ref{pbounded202}.
Let $\cb \colon W^{1,2}_\perp(\Omega) \to (W^{1,2}_\perp(\Omega))^*$
be such that 
$\gotb(u,v) = \langle \cb u,v \rangle_{(W^{1,2}_\perp(\Omega))^* \times W^{1,2}_\perp(\Omega)}$
for all $u,v \in W^{1,2}_\perp(\Omega)$.
Then $\cb$ is surjective by the Lax--Milgram theorem.
Let $T \in W^{-1,2}_\perp(\Omega)$.
Then $T \in W^{-1,2}_\emptyset(\Omega) = (W^{1,2}(\Omega))^*$. 
Let $\widetilde T = T|_{W^{1,2}_\perp(\Omega)}$.
Then $\widetilde T \in (W^{1,2}_\perp(\Omega))^*$.
Hence there is a $u \in W^{1,2}_\perp(\Omega)$ such that $\cb u = \widetilde T$.
If $v \in W^{1,2}_\perp(\Omega)$, then
\[
\langle \ca u,v \rangle_{W^{-1,2}_\emptyset(\Omega) \times W^{1,2}(\Omega)}
= \gotb(u,v)
= \langle \cb u,v \rangle_{(W^{1,2}_\perp(\Omega))^* \times W^{1,2}_\perp(\Omega)}
= \widetilde T(v)
= T(v)
 .  \]
Since 
$\langle \ca u,\one \rangle_{W^{-1,2}_\emptyset(\Omega) \times W^{1,2}(\Omega)}
= 0 = T(\one)$ it follows by linearity and Corollary~{\ref{cbounded203}\ref{cbounded203-2}} that $\ca_\perp u = \ca u = T$.
\end{proof}

As a main tool for the proof of Theorem~\ref{tbounded101} we need 
truncations of Sobolev functions, which we consider next.

For all $u \in W^{1,2}(\Omega,\Ri)$ and $k \in [0,\infty)$ define 
$\zeta_{u,k} = (\sgn u) \, (|u| - k)^+$.
If no confusion is possible then we write $\zeta_k = \zeta_{u,k}$.
Moreover, define $A_k = \{ x \in \Omega \colon |u(x)| > k \} = [|u| > k]$.

\begin{lemma} \label{lbounded206}
Let $u \in W^{1,2}(\Omega,\Ri)$.
Then one has the following.
\begin{tabel}
\item \label{lbounded206-1}
$\zeta_k \in W^{1,2}(\Omega)$ for all $k \in [0,\infty)$.
\item \label{lbounded206-2}
$\one_{A_k} \, D_j u = \one_{A_k} \, D_j \zeta_k$ for all 
$j \in \{ 1,\ldots,d \} $ and $k \in [0,\infty)$.
\item \label{lbounded206-3}
The map $k \mapsto \zeta_k$ is continuous from $[0,\infty)$ into $W^{1,2}(\Omega)$.
\item \label{lbounded206-4}
If $k \in [0,\infty)$, then the map
$v \mapsto \zeta_{v,k}$ is continuous from 
$W^{1,2}(\Omega,\Ri)$ into $W^{1,2}(\Omega)$.
\end{tabel}
\end{lemma}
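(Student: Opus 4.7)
The plan is to reduce all four parts to standard facts about Lipschitz compositions in Sobolev spaces. First I would observe the identity
\[
\zeta_{u,k} = (u-k)^+ - (u+k)^- = u - T_k(u),
\]
where $T_k(s) = \max(-k, \min(k, s))$ is the symmetric truncation at level $k$. Since $T_k$ is $1$-Lipschitz with $T_k(0) = 0$ and $u \in W^{1,2}(\Omega)$, the standard Sobolev chain rule gives $\zeta_k \in W^{1,2}(\Omega)$ together with
\[
D_j \zeta_k = \one_{\{u > k\}} \, D_j u + \one_{\{u < -k\}} \, D_j u = \one_{A_k} \, D_j u
\]
for every $j \in \{1,\ldots,d\}$. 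This immediately yields parts~\ref{lbounded206-1} and~\ref{lbounded206-2}.

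For part~\ref{lbounded206-3}, the elementary pointwise estimate $|\zeta_{k'}(x) - \zeta_k(x)| \leq |k' - k|$ shows that $k \mapsto \zeta_k$ is even Lipschitz into $L^2(\Omega)$. For the gradient component, the identity from Step~1 reduces matters to showing $\one_{A_{k_n}} \, D_j u \to \one_{A_k} \, D_j u$ in $L^2(\Omega)$ whenever $k_n \to k$. Since $D_j u$ vanishes almost everywhere on the level set $\{|u| = k\}$ (a classical consequence of the chain rule, applied to the constant piece of $T_k(u)$), one has $\one_{A_{k_n}} \to \one_{A_k}$ pointwise a.e.\ on $\{D_j u \neq 0\}$, and the dominated convergence theorem closes the step.

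Part~\ref{lbounded206-4} is the main obstacle, because the cutoff set depends in a discontinuous way on $v$. The plan is a subsequence-plus-uniform-integrability argument. Given $v_n \to v$ in $W^{1,2}(\Omega, \Ri)$, every subsequence admits a further subsequence along which $v_n \to v$ and $\nabla v_n \to \nabla v$ pointwise a.e. Off the set $\{|v| = k\}$ one then obtains $\one_{A_k(v_n)} \to \one_{A_k(v)}$ a.e., while on that set $\nabla v$ vanishes a.e., so $\one_{A_k(v_n)} \, \nabla v_n \to \one_{A_k(v)} \, \nabla v$ a.e.\ along the subsequence. Since $|\nabla v_n|^2$ is uniformly integrable (as $\nabla v_n \to \nabla v$ in $L^2$), Vitali's convergence theorem yields convergence of the gradients in $L^2(\Omega)$; an analogous but easier application, using the pointwise bound $|\zeta_{v_n,k}| \leq |v_n|$, gives convergence of $\zeta_{v_n,k}$ to $\zeta_{v,k}$ in $L^2(\Omega)$. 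The subsequence principle upgrades this to convergence of the full sequence in $W^{1,2}(\Omega)$, completing the proof.
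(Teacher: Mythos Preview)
Your proof is correct. Parts~\ref{lbounded206-1}--\ref{lbounded206-3} follow essentially the paper's route: the paper writes $\zeta_k = (u^+ - k)^+ - (u^- - k)^+$ and invokes \cite{GT} Lemma~7.6, while you use the equivalent identity $\zeta_k = u - T_k(u)$; both reduce to the standard Lipschitz chain rule, and both appeal to dominated convergence for~\ref{lbounded206-3}. The genuine difference is in part~\ref{lbounded206-4}: the paper simply observes that $v \mapsto \zeta_{v,k}$ is a superposition operator mapping $W^{1,2}(\Omega,\Ri)$ into $W^{1,2}(\Omega)$ by~\ref{lbounded206-1}, and then cites the Marcus--Mizel theorem (\cite{MaM} Theorem~1), which guarantees continuity of any such operator. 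Your subsequence-plus-Vitali argument is more elementary and self-contained, and in fact reproduces the core mechanism behind Marcus--Mizel in this special case; the paper's citation is shorter but relies on a nontrivial external result.
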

\begin{proof}
`\ref{lbounded206-1}' and `\ref{lbounded206-2}'.
Note that $\zeta_k = (u^+ - k)^+ - (u^- - k)^+$.
Then the statements follow from~\cite{GT} Lemma~7.6.

`\ref{lbounded206-3}'.
This follows from the Lebesgue dominated convergence theorem.

`\ref{lbounded206-4}'.
This follows from~\ref{lbounded206-1} and~\cite{MaM} Theorem~1.
\end{proof}

A key estimate for the proof of Theorem~\ref{tbounded101} is the next lemma.

\begin{lemma} \label{lbounded207}
Let $u \in W^{1,2}_\perp(\Omega,\Ri)$.
Then there exists a $\gamma \geq 0$ such that 
$\|\zeta_k\|_2 \leq \gamma \, \|\nabla \zeta_k\|_2$
for all $k \in [0,\infty)$.
\end{lemma}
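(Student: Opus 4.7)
The plan is to argue by contradiction, leveraging the compact embedding $W^{1,2}(\Omega) \subset L^2(\Omega)$ supplied by Lemma~\ref{lbounded201}. The case $u = 0$ is immediate with $\gamma = 0$, so assume $u \neq 0$. If no $\gamma$ works, then for each $n \in \Ni$ there is $k_n \in [0,\infty)$ with $\|\zeta_{k_n}\|_2 > n\|\nabla \zeta_{k_n}\|_2$, and in particular $\zeta_{k_n} \neq 0$. Normalise $v_n = \zeta_{k_n}/\|\zeta_{k_n}\|_2$ so that $\|v_n\|_2 = 1$ and $\|\nabla v_n\|_2 < 1/n$, and use the compactness of $W^{1,2}(\Omega) \hookrightarrow L^2(\Omega)$ together with weak lower semicontinuity of the gradient to pass to a subsequence converging in $L^2(\Omega)$ to some $v$ with $\|v\|_2 = 1$ and $\|\nabla v\|_2 = 0$. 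Since $\Omega$ is connected, \cite{Zie2}~Corollary~2.1.9 forces $v$ to be a constant $\alpha$, and $\|v\|_2 = 1$ ensures $\alpha \neq 0$.

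Next I would pass to a further subsequence so that $k_n \to k^* \in [0,\infty]$. In the non-degenerate case $k^* < \infty$ and $\zeta_{k^*} \neq 0$, continuity of $k \mapsto \zeta_k$ from Lemma~\ref{lbounded206}\ref{lbounded206-3} gives $\|\zeta_{k_n}\|_2 \to \|\zeta_{k^*}\|_2 > 0$ and $\|\nabla \zeta_{k_n}\|_2 \to \|\nabla \zeta_{k^*}\|_2$, so the ratio $\|\zeta_{k_n}\|_2/\|\nabla \zeta_{k_n}\|_2$ tends to a finite limit, contradicting the lower bound $> n$. A side fact I need here is $\|\nabla \zeta_{k^*}\|_2 > 0$: otherwise $\zeta_{k^*}$ would be a nonzero constant on the connected $\Omega$, which forces $|u|$ to exceed $k^*$ almost everywhere and $u$ to take at most two essential values; Lemma~\ref{lbounded206}\ref{lbounded206-2} (via \cite{GT}~Lemma~7.6) then yields $\nabla u = 0$ a.e., so $u$ is constant and hence $u = 0$ by the mean-zero constraint, a contradiction.

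The harder cases are $k^* < \infty$ with $\zeta_{k^*} = 0$, which means $|u| \leq k^*$ a.e., and $k^* = \infty$. Here I would exploit the support identity $v_n = 0$ on $\{|u| \leq k_n\}$: for every $k' < k^*$, eventually $k_n > k'$, so $v_n$ vanishes on $\{|u| \leq k'\}$; passing to the $L^2$-limit, the constant $\alpha$ vanishes on that set. Either $|\{|u| \leq k'\}| > 0$ for some such $k'$, in which case $\alpha = 0$ contradicts $\|v\|_2 = 1$; or $|u| \geq k^*$ almost everywhere. The latter is impossible for $k^* = \infty$ since $u$ is finite a.e., while for $k^* < \infty$ it combines with $|u| \leq k^*$ to give $|u| = k^*$ a.e., and the same level-set argument as above forces $u$ constant and thus $u = 0$, contradicting $k^* > 0$.

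The main obstacle will be precisely this degenerate regime: continuity of the truncation in $W^{1,2}$ no longer pins down the normalised sequence, and the contradiction has to be extracted by combining the support identity $v_n = 0$ on $\{|u| \leq k_n\}$, the vanishing of the gradient of a Sobolev function on its level sets, and the connectedness of~$\Omega$.
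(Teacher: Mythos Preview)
Your argument is correct and reaches the same conclusion, but the organisation differs from the paper's. The paper splits into the cases $u$ unbounded and $u$ bounded. In the unbounded case it argues exactly as you do for $k^*=\infty$, namely that the normalised truncations converge to $0$ pointwise a.e.\ and hence the $L^2$-limit cannot have norm~$1$. In the bounded case, rather than tracking a limit $k^*$ of the offending levels, the paper first shows directly that $\|\nabla\zeta_k\|_2>0$ for all $k<\|u\|_\infty$ (so continuity handles any compact subinterval), and then, for $k$ close to $\|u\|_\infty$, invokes the subset Poincar\'e inequality of Proposition~\ref{pbounded202} on $\Omega_0=[|u|\leq k_0]$ for a suitable $k_0<\|u\|_\infty$ of positive measure; since $\zeta_k$ vanishes on $\Omega_0$ for $k>k_0$, the inequality $\|\zeta_k\|_2\leq c_0\|\nabla\zeta_k\|_2$ follows. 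Your route replaces this second use of Proposition~\ref{pbounded202} by the observation that the constant limit $\alpha$ must vanish on every set $[|u|\leq k']$ with $k'<k^*$, which forces either $\alpha=0$ or $|u|=k^*$ a.e.; the latter is then excluded by the level-set/connectedness argument. Both methods rest on the same ingredients (compact embedding, connectedness, and the support identity $\zeta_k=0$ on $[|u|\leq k]$); yours is a single unified contradiction, while the paper's version makes the role of the subset Poincar\'e inequality explicit. One minor point: the fact that $\nabla u$ vanishes on level sets is \cite{GT}~Lemma~7.7 rather than~7.6, and your claim that $u$ ``takes at most two essential values'' when $\zeta_{k^*}$ is a nonzero constant can in fact be sharpened to $u$ being a single nonzero constant, which contradicts $\int_\Omega u=0$ directly.
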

\begin{proof}
We split the proof into two cases depending whether $u$ is bounded or not.

{\bf Case 1.~}~Suppose $u$ is unbounded.
If $k \in [0,\infty)$ and $\|\nabla \zeta_k\|_2= 0$, then 
$\zeta_k$ is constant and consequently $u$ is bounded, which is a 
contradiction.
Hence $\|\nabla \zeta_k\|_2\neq 0$ for all $k \in [0,\infty)$.
Since both $k \mapsto \|\zeta_k\|_2$ and 
$k \mapsto \|\nabla \zeta_k\|_2$ are continuous on
$[0,\infty)$ by Lemma~\ref{lbounded206}\ref{lbounded206-3}, 
it suffices to show that 
\begin{equation}
\limsup_{k \to \infty} 
   \frac{ \|\zeta_k\|_2 }
        { \|\nabla \zeta_k\|_2 }
     \leq 1 .
\label{elbounded207;1}
\end{equation}
Suppose that (\ref{elbounded207;1}) is false.
Then there exists a sequence $(k_n)_{n \in \Ni}$ in $\Ri$ such that 
$k_n \geq n$ for all $n \in \Ni$ and 
$\|\zeta_{k_n}\|_2 > \|\nabla \zeta_{k_n}\|_2$ for all $n \in \Ni$.
Define $v_n = \|\zeta_{k_n}\|_2^{-1} \, \zeta_{k_n}$
for all $n \in \Ni$.
Then $v_n \in W^{1,2}(\Omega)$, $\|v_n\|_2 = 1$
and $\|\nabla v_n\|_2 \leq 1$ for all $n \in \Ni$.
So the sequence $(v_n)_{n \in \Ni}$ is bounded in $W^{1,2}(\Omega)$.
Passing to a subsequence if necessary we may assume that there is a 
$v \in W^{1,2}(\Omega)$ such that $\lim v_n = v$ weakly in $W^{1,2}(\Omega)$.
Then $\lim v_n = v$ in $L^2(\Omega)$.
So $\|v\|_2 = 1$ and in particular $v \neq 0$.
But $v(x) = \lim_{n \to \infty} v_n(x) = 0$ for almost every $x \in \Omega$.
This is a contradiction.

{\bf Case 2.~}~Suppose $u$ is bounded.
Without loss of generality we may assume that $u \neq 0$.
Let $k \in [0,\|u\|_\infty)$ and suppose that 
$\|\nabla \zeta_k\|_2 = 0$.
Then $\zeta_k$ is constant, say~$\delta$.
If $\delta = 0$, then $|u| \leq k$ a.e.,
which is not possible since $k < \|u\|_\infty$.
Suppose $\delta > 0$.
Note that $\zeta_k(x) \leq 0 < \delta$ for all $x \in \Omega$ with $u(x) \leq k$.
So $u(x) = k + \delta$ for all $x \in \Omega$.
But then $\int_\Omega u \neq 0$.
Similarly $\delta < 0$ gives a contradiction.
Hence $\|\nabla \zeta_k\|_2 \neq 0$ for 
all $k \in [0,\|u\|_\infty)$.

Arguing as in Case 1 and using Lemma~\ref{lbounded206}\ref{lbounded206-3}
it follows that for all $k_1 \in (0,\|u\|_\infty)$
there exists a $c_1 > 0$ such that 
$\|\zeta_k\|_2 \leq c_1 \, \|\nabla \zeta_k\|_2$
for all $k \in [0,k_1]$.

Finally we show that there exist $k_0 \in (0,\|u\|_\infty)$
and $c_0 > 0$ such that 
$\|\zeta_k\|_2 \leq c_0 \, \|\nabla \zeta_k\|_2$
for all $k \in (k_0,\infty)$.
If $|u| = \|u\|_\infty$ a.e., then 
$| [u = \|u\|_\infty ]|
   = \frac{1}{2} \, |\Omega| > 0$,
where we use that $\int_\Omega u = 0$.
Then $w = \one_{[u = \|u\|_\infty]}\,u = u \vee 0 \in W^{1,2}(\Omega)$.
Using~\cite{GT} Lemma~7.7 we deduce that $\nabla w = 0$ a.e.\ and
this implies that $|[u = \|u\|_\infty]| \in \{ 0,|\Omega| \} $, 
which is a contradiction.
Hence there is a $k_0 \in (0,\|u\|_\infty)$ such that 
$|[ |u| \leq k_0 ]| > 0$.
Write $\Omega_0 = [ |u| \leq k_0 ]$.
By Lemma~\ref{lbounded201} and 
Proposition~\ref{pbounded202} there exists a $c_0 > 0$ such that 
$\|v\|_2 \leq c_0 \, \|\nabla v\|_2$
for all $v \in W^{1,2}(\Omega)$ with $\int_{\Omega_0} v = 0$.
If $k \in (k_0,\infty)$, then $\zeta_k(x) = 0$ for all 
$x \in \Omega_0$, so $\int_{\Omega_0} \zeta_k = 0$.
Hence $\|\zeta_k\|_2 \leq c_0 \, \|\nabla \zeta_k\|_2$.
\end{proof}

For all $u \in W^{1,2}_\perp(\Omega,\Ri)$
define $\gamma_u \in [0,\infty)$ to be the minimum of all 
$\gamma \geq 0$ such that 
$\|\zeta_k\|_2 \leq \gamma \, \|\nabla \zeta_k\|_2$
for all $k \in [0,\infty)$. 
Recall that $r > 2$ is such that $W^{1,2}(\Omega) \subset L^r(\Omega)$.

\begin{prop} \label{pbounded301}
Let $u \in W^{1,2}_\perp(\Omega,\Ri)$ and 
$q > d$ with $\frac{1}{2}-\frac{1}{q} > \frac{1}{r}$.
Further let $f_1,\ldots,f_d \in L^q(\Omega)$
and suppose that 
$\langle \ca u,v\rangle_{W^{-1,2}_\emptyset(\Omega) \times W^{1,2}(\Omega)} 
= \sum_{j=1}^d (f_j, \partial_j v)_2$
for all $v \in W^{1,2}(\Omega)$.
Then $u \in L^\infty(\Omega)$.
Moreover
\[
\|u\|_\infty
\leq 2^{(\frac{1}{2}-\frac{1}{q})/\delta} \, \frac{E}{\nu} \, \sqrt{\bigl(1 +
  \gamma_u^2\bigr)|\Omega|^{\delta}} \, \left(\sum_{j=1}^d \bigl\|f_j\bigr\|_q^2\right)^{\frac12}
,  \]
where
$\delta = \frac{1}{2} - \frac{1}{q} - \frac{1}{r} > 0$
and $\nu$ is the ellipticity constant of $\mu$.
Finally, $E > 0$ is such that 
$\|v\|_r \leq E \, \|v\|_{W^{1,2}(\Omega)}$ for all 
$v \in W^{1,2}(\Omega)$.
\end{prop}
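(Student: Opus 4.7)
The plan is to execute Stampacchia's truncation argument on the level-set function $\zeta_k = (\sgn u)(|u|-k)^+$ and super-level set $A_k = \{|u|>k\}$. By Lemma~\ref{lbounded206}\ref{lbounded206-1}--\ref{lbounded206-2}, $\zeta_k \in W^{1,2}(\Omega)$ is admissible as a test function and $\nabla u = \nabla \zeta_k$ a.e.\ on $A_k$, with $\nabla \zeta_k = 0$ off $A_k$. Testing the given identity against $\zeta_k$, restricting the integral to $A_k$ and invoking ellipticity of $\mu$ therefore yields
\[
\nu\,\|\nabla \zeta_k\|_2^2 \;\leq\; \RRe \sum_{j=1}^d \int_\Omega f_j\,\overline{\partial_j \zeta_k}.
\]

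Next I would estimate the right-hand side by H\"older's inequality with the three exponents $q$, $2$ and $s$, where $\frac{1}{s} = \frac{1}{2}-\frac{1}{q}$, exploiting $\partial_j \zeta_k = \one_{A_k}\partial_j \zeta_k$ to extract a factor $|A_k|^{1/s}$ from each summand. Cauchy--Schwarz over $j$ then gives, with $F = \bigl(\sum_j \|f_j\|_q^2\bigr)^{1/2}$,
\[
\|\nabla \zeta_k\|_2 \;\leq\; \tfrac{1}{\nu}\,F\,|A_k|^{1/s}.
\]
Now invoke Lemma~\ref{lbounded207}, which gives $\|\zeta_k\|_2 \leq \gamma_u\,\|\nabla \zeta_k\|_2$, together with the Sobolev embedding $W^{1,2}(\Omega) \subset L^r(\Omega)$ to obtain $\|\zeta_k\|_r \leq E\sqrt{1+\gamma_u^2}\,\|\nabla \zeta_k\|_2$. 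A Chebyshev argument on $A_h$ for $h>k$, namely $(h-k)^r\,|A_h| \leq \|\zeta_k\|_r^r$, combined with the two previous inequalities produces the recursive bound
\[
|A_h| \;\leq\; \frac{M^r}{(h-k)^r}\,|A_k|^{r/s},
\qquad
M = \tfrac{E}{\nu}\,F\,\sqrt{1+\gamma_u^2}.
\]

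The hypothesis $\frac{1}{2}-\frac{1}{q} > \frac{1}{r}$ is exactly $s<r$, so the exponent $r/s$ exceeds one and Stampacchia's lemma (\cite{KinS}, Chapter~II, Appendix~B, Lemma~2.1) applied to the non-increasing function $\varphi(k)=|A_k|$ concludes that there is a finite $k_*$ with $|A_{k_*}|=0$, i.e.\ $\|u\|_\infty \leq k_*$. Reading off the formula from that lemma with $\alpha=r$, $\beta=r/s$ and $\varphi(0)\leq|\Omega|$ gives $k_* = M\cdot 2^{\beta/(\beta-1)}\cdot|\Omega|^{(\beta-1)/r}$, which matches the form stated in the proposition via the identities $(\beta-1)/r = \delta$ and $\beta/(\beta-1) = r/(r-s) = (\tfrac{1}{2}-\tfrac{1}{q})/\delta$. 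I expect the conceptually delicate step to be the combination of Lemma~\ref{lbounded207} with the Sobolev embedding, since the whole method hinges on the \emph{uniform} Poincar\'e constant $\gamma_u$ for the truncations of the mean-value-free function~$u$; the remaining work is the standard bookkeeping of H\"older exponents and of the Stampacchia iteration constants.
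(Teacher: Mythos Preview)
Your proposal is correct and follows essentially the same route as the paper: test with $\zeta_k$, use ellipticity and Lemma~\ref{lbounded206} to reduce to $A_k$, bound $\|\nabla\zeta_k\|_2$ via H\"older (the paper does Cauchy--Schwarz first and then H\"older on $\int_{A_k}|f_j|^2$, which yields the identical factor $|A_k|^{1/s}=|A_k|^{\frac12-\frac1q}$), invoke Lemma~\ref{lbounded207} together with the Sobolev embedding to control $\|\zeta_k\|_r$, and close with the Chebyshev estimate and the Stampacchia lemma. Your identification of the exponents $\beta=r/s=(\tfrac12-\tfrac1q)r$ and $(\beta-1)/r=\delta$ agrees exactly with the paper's computation.
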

\begin{proof}
For all $k \in [0,\infty)$ define 
$\zeta_k = (\sgn u) \, (|u| - k)^+ \in W^{1,2}(\Omega)$ 
and $A_k = [|u| > k] $ as before.
Let $k \in [0,\infty)$.
Then 
\begin{eqnarray*}
\nu \, \bigl\|\nabla \zeta_k\bigr\|_2^2
& \leq & \int_\Omega \mu \nabla \zeta_k \cdot \nabla \zeta_k  = \int_\Omega \mu \nabla u \cdot \nabla \zeta_k  = \sum_{j=1}^d \int_{A_k} f_j \, \partial_j \zeta_k  \\
& \leq & \Bigl( \sum_{j=1}^d \int_{A_k} |f_j|^2 \Bigr)^{1/2} 
          \bigl\|\nabla \zeta_k\bigr\|_2 \\
& \leq & \frac{\nu}{2} \, \bigl\|\nabla \zeta_k\bigr\|_2^2
   + \frac{1}{2\nu} \, \sum_{j=1}^d \int_{A_k} |f_j|^2
 .
\end{eqnarray*}
Hence 
\[
\bigl\|\nabla \zeta_k\bigr\|_2^2
\leq \frac{1}{\nu^2} \sum_{j=1}^d \int_{A_k} |f_j|^2
\leq \frac{|A_k|^{1 - \frac{2}{q}}}{\nu^2} \, \sum_{j=1}^d \bigl\|f_j\bigr\|_q^2
 .  \]
By assumption $W^{1,2}(\Omega) \subset L^r(\Omega)$.
Then 
\begin{eqnarray*}
\left( \int_{A_k} \bigl( |u| - k \bigr)^{r} \right)^{\frac{2}{r}}
& = & \bigl\|\zeta_k\bigr\|_{L^{r}(\Omega)}^2  \\
& \leq & E^2 \, \bigl\|\zeta_k\bigr\|_{W^{1,2}(\Omega)}^2
= E^2 \, \Bigl(\bigl\|\zeta_k\bigr\|_2^2 + \bigl\|\nabla \zeta_k\bigr\|_2^2 \Bigr)  \\
& \leq & E^2 \, \bigl(1 + \gamma_u^2\bigr) \, 
   \frac{|A_k|^{1 - \frac{2}{q}}}{\nu^2} \, \sum_{j=1}^d \bigl\|f_j\bigr\|_q^2
 .
\end{eqnarray*}
Next let $h,k \in [0,\infty)$ with $h > k$.
Then $A_h \subset A_k$ and 
\[
(h-k)^2 \, |A_h|^{\frac{2}{r}}
\leq \left( \int_{A_h} \bigl| |u| - k \bigr|^{r} \right)^{\frac{2}{r}}  
\leq \left( \int_{A_k} \bigl| |u| - k \bigr|^{r} \right)^{\frac{2}{r}}  
\leq E^2 \, \bigl(1 + \gamma_u^2\bigr) \, 
   \frac{|A_k|^{1 - \frac{2}{q}}}{\nu^2} \, \sum_{j=1}^d \bigl\|f_j\bigr\|_{q}^2
 .  \]
Equivalently
\[
|A_h|
\leq \frac{1}{(h-k)^r} \, \Big( \frac{E}{\nu} \Big)^{r} 
     \bigl(1 + \gamma_u^2\bigr)^{\frac{r}{2}} \, 
     \biggl( \sum_{j=1}^d \bigl\|f_j\bigr\|_{q}^2 \biggr)^{\frac{r}{2}} \, 
     |A_k|^{(1-\frac{2}{q})\frac{r}{2}}
.  \]
Due to $(1-\frac{2}{q})\frac{r}{2} = (\frac{1}{2} - \frac{1}{q}) r > 1$ by assumption, 
it now follows from the Stampacchia lemma (\cite{KinS} Chapter~II, Appendix~B, Lemma~2.1) that 
$u \in L^\infty(\Omega)$ and 
\[
\|u\|_\infty
\leq 2^{(\frac{1}{2}-\frac{1}{q})/\delta} \, \frac{E}{\nu} \, 
     \sqrt{\bigl(1 +
  \gamma_u^2\bigr)|\Omega|^{\delta}} \, 
         \left(\sum_{j=1}^d \bigl\|f_j\bigr\|_q^2\right)^{\frac12}
 .  \]
This completes the proof of the proposition.
\end{proof}

\begin{proof}[{\bf Proof of Theorem~\ref{tbounded101}.}]
Let $u \in W^{1,2}(\Omega)$ be
such that $\ca u \in (W^{1,p}(\Omega))^*$, where~$p$ is the dual 
exponent of $q$.
By Lemma~\ref{lbounded201} the inclusion $W^{1,p}(\Omega) \subset L^p(\Omega)$ is compact.
Hence by Proposition~\ref{pbounded204} there exist $\kappa \in \Ci$ and 
$f_1,\ldots,f_d \in L^q(\Omega)$ such that 
\[
\langle \ca u,v \rangle_{(W^{1,p}(\Omega))^* \times W^{1,p}(\Omega)}
= \kappa \int_\Omega \overline v + \sum_{j=1}^d \int_\Omega f_j \, \overline{\partial_j v}
\]
for all $v \in W^{1,p}(\Omega)$.
Choosing $v = \one$ one deduces that $\kappa = 0$ and $\ca u \in W^{-1,2}_\perp(\Omega)$.
Without loss of generality we may assume that $u \in W^{1,2}_\perp(\Omega)$.
Moreover, we may also assume that $u$ is real valued.
Now apply Proposition~\ref{pbounded301} to obtain $u \in L^\infty(\Omega)$.

If we start with $T \in W^{-1,q}_\perp(\Omega)$, then there exists a unique
$u \in W^{1,2}_\perp(\Omega)$ such that $\ca u = T$ by Proposition~\ref{pbounded205}.
Then $\ca u \in W^{-1,q}_\perp(\Omega) \subset (W^{1,p}(\Omega))^*$, so
$u \in L^\infty(\Omega)$ by the above.

For the estimate it suffices to show that the map $T \mapsto u$ has closed graph
in the space $W^{-1,q}_\perp(\Omega) \times L^\infty(\Omega)$.
Let $T,T_1,T_2,\ldots \in W^{-1,q}_\perp(\Omega)$ and $u \in L^\infty(\Omega)$.
Suppose that $\lim T_n = T$ in $W^{-1,q}_\perp(\Omega)$
and $\lim (\ca_\perp)^{-1} T_n = u$ in $L^\infty(\Omega)$.
Then $\lim T_n = T$ in $W^{-1,2}_\perp(\Omega)$, 
so $\lim (\ca_\perp)^{-1} T_n = (\ca_\perp)^{-1} T$ in $W^{1,2}_\perp(\Omega)$
and hence also in $L^2(\Omega)$.
But $\lim (\ca_\perp)^{-1} T_n = u$ in $L^\infty(\Omega)$ and 
therefore also in $L^2(\Omega)$.
Consequently $(\ca_\perp)^{-1} T = u$ as required.
\end{proof}

\section{Interpolation and maximal Sobolev regularity}\label{boundedS4}

In this section, we use the structure of $W^{1,p}_\perp(\Omega)$ as a complemented subspace
of $W^{1,p}(\Omega)$ to establish interpolation results.
Optimal Sobolev
regularity for the pure Neumann operator $\ca_\perp$ for $p$ close to $2$ also follows.
This is
particularly interesting for space dimension $d=2$.
The first step is to show that
$W^{1,p}_\perp(\Omega)$ and $W^{-1,p}_\perp(\Omega)$ form an interpolation scale with
respect to $p$.
\begin{prop}
  \label{pbounded401}
  Let $\Omega \subset \R^d$ be open and bounded.
Let $p_0,p_1 \in (1,\infty)$,
  $\theta \in (0,1)$ and set $\frac1p = \frac{1-\theta}{p_0} + \frac\theta{p_1}$.
Then
  \begin{equation*}
    \bigl[W^{1,p_0}_\perp(\Omega),W^{1,p_1}_\perp(\Omega)\bigr]_\theta =
    \bigl(W^{1,p_0}_\perp(\Omega),W^{1,p_1}_\perp(\Omega)\bigr)_{\theta,p} = W^{1,p}_\perp(\Omega)
  \end{equation*}
  and
  \begin{equation*}
        \bigl[W^{-1,p_0}_\perp(\Omega),W^{-1,p_1}_\perp(\Omega)\bigr]_\theta =
    \bigl(W^{-1,p_0}_\perp(\Omega),W^{-1,p_1}_\perp(\Omega)\bigr)_{\theta,p} = W^{-1,p}_\perp(\Omega).
  \end{equation*}
\end{prop}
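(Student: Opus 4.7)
The plan is to reduce both identities to the interpolation of the ambient Sobolev scales $W^{1,p}(\Omega)$ and $W^{-1,p}_\emptyset(\Omega) = (W^{1,p'}(\Omega))^*$, exploiting that $W^{1,p}_\perp(\Omega)$ is a complemented subspace of $W^{1,p}(\Omega)$ uniformly in $p$. The main tool is the retraction--coretraction principle (see, e.g., Bergh--L\"ofstr\"om, Theorem~6.4.2): if a projection is bounded and consistent on an interpolation couple, then every interpolation functor commutes with applying that projection.

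For the positive-order identity, I would first observe that the map $P\colon u\mapsto u-|\Omega|^{-1}\int_\Omega u$ is bounded on $W^{1,p}(\Omega)$ for every $p\in(1,\infty)$ by H\"older's inequality alone, with range $W^{1,p}_\perp$ and kernel $\Ci\one$; in particular, the compactness hypothesis of Corollary~\ref{cbounded203} is \emph{not} needed at this stage. Together with the inclusion $\iota\colon W^{1,p}_\perp\hookrightarrow W^{1,p}$ this yields a consistent retraction--coretraction pair with $P\iota = I$. Since Lemma~\ref{lbounded201} already establishes (via Liu--Tai combined with the reiteration theorem) that $[W^{1,p_0},W^{1,p_1}]_\theta = (W^{1,p_0},W^{1,p_1})_{\theta,p} = W^{1,p}$, applying the retraction--coretraction theorem immediately gives $[W^{1,p_0}_\perp,W^{1,p_1}_\perp]_\theta = (W^{1,p_0}_\perp,W^{1,p_1}_\perp)_{\theta,p} = P(W^{1,p}) = W^{1,p}_\perp$.

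For the negative-order identity I would apply the same principle to the adjoint $P^*$ on $W^{-1,p}_\emptyset(\Omega)$; its range is the annihilator of $\Ci\one$, which is precisely $W^{-1,p}_\perp(\Omega)$. This reduces matters to showing that $\{W^{-1,p}_\emptyset(\Omega)\}_p$ is itself an interpolation scale with the same secondary parameter $p$ for both methods. I would obtain this by dualising the positive-order identity: each $W^{1,p_i'}(\Omega)$ is reflexive, so by the duality theorems for complex and real interpolation (Bergh--L\"ofstr\"om Cor.~4.5.2 and Thm.~3.7.1) one has $[W^{-1,p_0}_\emptyset,W^{-1,p_1}_\emptyset]_\theta = [W^{1,p_0'},W^{1,p_1'}]_\theta^* = (W^{1,p'})^* = W^{-1,p}_\emptyset$ and analogously $(W^{-1,p_0}_\emptyset,W^{-1,p_1}_\emptyset)_{\theta,p} = (W^{1,p_0'},W^{1,p_1'})_{\theta,p'}^* = W^{-1,p}_\emptyset$, where $\tfrac{1}{p'}=\tfrac{1-\theta}{p_0'}+\tfrac{\theta}{p_1'}$.

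The main technical hurdle is the real-interpolation duality step: Bergh--L\"ofstr\"om Theorem~3.7.1 additionally requires the couple $(W^{1,p_0'}(\Omega),W^{1,p_1'}(\Omega))$ to be \emph{regular}, i.e., $W^{1,\max(p_0',p_1')}(\Omega) = W^{1,p_0'}\cap W^{1,p_1'}$ dense in each $W^{1,p_i'}(\Omega)$. For a general bounded open $\Omega$ this density is not automatic; I would establish it by a Meyers--Serrin style interior approximation combined with truncation of the approximants, or alternatively bypass the duality by applying the retraction--coretraction theorem directly to the reflexive pair of dual scales $\{W^{1,p}_\perp\}_p$ and $\{W^{-1,p'}_\perp\}_p\cong\{(W^{1,p}_\perp)^*\}_p$, where density may be verified more easily in the smaller mean-value-zero subspaces.
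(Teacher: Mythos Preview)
Your approach coincides with the paper's: both interpolate the full scale $W^{1,p}(\Omega)$ via Liu--Tai plus reiteration and then pass to the complemented subspace $W^{1,p}_\perp(\Omega)$ through the consistent projection $P$, the paper invoking Triebel's complemented-subspace theorem~1.17.1.1 where you invoke the equivalent retraction--coretraction principle; for the negative-order spaces both proceed by duality, the paper likewise using the explicit dual projection $T\mapsto T-|\Omega|^{-1}\langle T,\one\rangle\,\one$ on $W^{-1,q}_\emptyset(\Omega)$. Your remark that boundedness of $P$ on $W^{1,p}(\Omega)$ needs only H\"older and not the compactness hypothesis of Corollary~\ref{cbounded203} is correct and in fact required here, since Proposition~\ref{pbounded401} carries no embedding assumption.

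The regularity concern you flag for the real-interpolation duality step is legitimate but dissolves for free from what is already in hand. The Liu--Tai identity $W^{1,s}(\Omega)=(W^{1,1}(\Omega),W^{1,\infty}(\Omega))_{1-1/s,\,s}$ with $s<\infty$ forces $W^{1,1}(\Omega)\cap W^{1,\infty}(\Omega)=W^{1,\infty}(\Omega)$ to be dense in $W^{1,s}(\Omega)$, this being a general feature of real interpolation spaces with finite secondary exponent (immediate from the $J$-method representation). Since $W^{1,\infty}(\Omega)\subset W^{1,q}(\Omega)$ for every $q$ on a bounded set, density of $W^{1,\max(p_0',p_1')}(\Omega)$ in each $W^{1,p_i'}(\Omega)$ follows, so neither a Meyers--Serrin argument nor your alternative bypass is needed. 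One small inaccuracy: the display~(\ref{ebounded1}) in Lemma~\ref{lbounded201} only yields the complex identity; the real counterpart $(W^{1,p_0},W^{1,p_1})_{\theta,p}=W^{1,p}$ is obtained separately in the paper's proof via the real reiteration theorem, \cite{BL}~Theorem~3.5.3.
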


\begin{proof}
It follows from~\eqref{ebounded1} that 
\[
\bigl[W^{1,p_0}(\Omega),W^{1,p_1}(\Omega)\bigr]_\theta = W^{1,p}(\Omega)
 .  \]
Arguing as in~\eqref{ebounded1}, but using the reiteration theorem 
for real interpolation~\cite{BL}, Theorem~3.5.3 one deduces similarly
\begin{equation*}
  \bigl(W^{1,p_0}(\Omega),W^{1,p_1}(\Omega)\bigr)_{\theta,p} = W^{1,p}(\Omega).
\end{equation*}
Note that for all $r \in (1,\infty)$ the projection $P$ in
Corollary~\ref{cbounded203}\ref{cbounded203-2} 
maps $W^{1,r}(\Omega)$ onto $W^{1,r}_\perp(\Omega)$, so
$W^{1,r}_\perp(\Omega)$ is a complemented subspace of $W^{1,r}(\Omega)$.
We further observe that 
$W^{1,p_i}_\perp(\Omega) = W^{1,p_i}(\Omega) \cap W^{1,\min(p_0,p_1)}_\perp(\Omega)$ for
$i=1,2$.
Thus, interpolation theory for complemented subspaces (\cite{Tri}
Theorem~1.17.1.1) shows that
\begin{equation*}
 \bigl[W^{1,p_0}_\perp(\Omega),W^{1,p_1}_\perp(\Omega)\bigr]_\theta =
  \bigl(W^{1,p_0}_\perp(\Omega),W^{1,p_1}_\perp(\Omega)\bigr)_{\theta,p} =
  W^{1,p}(\Omega) \cap W^{1,\min(p_0,p_1)}_\perp(\Omega) = W^{1,p}_\perp(\Omega).
\end{equation*}
Concerning the dual spaces, it is easy to see that for all $q \in (1,\infty)$ the 
operator $T \mapsto T - \frac{1}{|\Omega|} \, \langle T, \one \rangle \one$
is a projection from $W^{-1,q}(\Omega)$ onto $W^{-1,q}_\perp(\Omega)$.
Hence the assertion follows with the same argument and the
duality properties of the real and complex 
interpolation functors, see~\cite{Tri} Subsections~1.11.2 and~1.11.3.
\end{proof}

The first result
derived from Proposition~\ref{pbounded401} together with Theorem~\ref{tbounded101}
is the following mapping property for $\ca_\perp^{-1}$ on the 
$W^{-1,p}_\perp(\Omega)$ spaces for all
$p > 2$.
Note that we do not require that $p > d$.

\begin{cor} \label{cbounded402}
Let $\Omega \subset \Ri^d$ be a bounded connected open set.
Let $r \in (2,\infty)$ and
suppose that $W^{1,2}(\Omega) \subset L^r(\Omega)$.
Let further
$q \in (d,\infty)$ and suppose that $\frac{1}{2}-\frac{1}{q} > \frac{1}{r}$.
Let $p \in (2,q)$.
Let $\mu \colon \Omega \to \Ri^{d \times d}$ be a bounded measurable 
elliptic function and let $\ca \colon W^{1,2}(\Omega) \to (W^{1,2}(\Omega))^*$
be the associated operator.
Then $\ca_\perp^{-1}$ maps $W^{-1,p}_\perp(\Omega)$ into $L^s(\Omega)$,
where $\frac{1}{s} = \frac{1-\theta}{r}$ and 
$\theta \in (0,1)$ is such that 
$\frac{1}{p} = \frac{1-\theta}{2} + \frac{\theta}{q}$.
\end{cor}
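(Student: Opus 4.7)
The plan is to deduce this corollary by complex interpolation between two known mapping properties of $\ca_\perp^{-1}$, using Proposition~\ref{pbounded401} to identify the interpolation of the source spaces and the standard Riesz--Thorin formula for the target spaces.

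First I would fix the two endpoints. At $p=2$, Proposition~\ref{pbounded205} together with the hypothesis $W^{1,2}(\Omega) \subset L^r(\Omega)$ gives that $\ca_\perp^{-1}$ is a bounded linear map from $W^{-1,2}_\perp(\Omega)$ into $L^r(\Omega)$. At $p=q$, Theorem~\ref{tbounded101} (more precisely, the final estimate in its statement, combined with the observation that $W^{-1,q}_\perp(\Omega) = \{T \in (W^{1,p}(\Omega))^* \colon T(\one)=0\}$) gives that $\ca_\perp^{-1}$ is a bounded linear map from $W^{-1,q}_\perp(\Omega)$ into $L^\infty(\Omega)$. Both maps are, by Proposition~\ref{pbounded205}, restrictions of the same operator defined on the union space.

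Second, I would invoke Proposition~\ref{pbounded401} applied with $p_0=2$ and $p_1=q$ to identify
\[
\bigl[W^{-1,2}_\perp(\Omega),W^{-1,q}_\perp(\Omega)\bigr]_\theta = W^{-1,p}_\perp(\Omega),
\]
where $\theta$ is chosen exactly as in the statement so that $\frac{1}{p} = \frac{1-\theta}{2} + \frac{\theta}{q}$. On the target side, the classical complex interpolation identity for Lebesgue spaces gives
\[
\bigl[L^r(\Omega), L^\infty(\Omega)\bigr]_\theta = L^s(\Omega), \qquad \frac{1}{s} = \frac{1-\theta}{r} + \frac{\theta}{\infty} = \frac{1-\theta}{r},
\]
matching the exponent $s$ required. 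Applying the complex interpolation functor to the bounded linear operator $\ca_\perp^{-1}$ then yields the desired mapping property $\ca_\perp^{-1} \colon W^{-1,p}_\perp(\Omega) \to L^s(\Omega)$, with an explicit norm bound by the two endpoint norms raised to the powers $1-\theta$ and~$\theta$.

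I do not expect a serious obstacle here; the only point requiring a bit of care is the interpolation of the domain spaces, but this has already been handled in Proposition~\ref{pbounded401}. One should also check that the target $L^s(\Omega)$ arises via the natural embedding $L^r(\Omega), L^\infty(\Omega) \hookrightarrow L^r(\Omega)$ (used as the ambient space), which is automatic since $\Omega$ has finite measure. Everything else reduces to plugging the exponents into the standard interpolation theorem for linear operators.
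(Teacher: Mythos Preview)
Your proposal is correct and follows exactly the paper's own argument: establish the two endpoint bounds $\ca_\perp^{-1}\colon W^{-1,2}_\perp(\Omega)\to L^r(\Omega)$ via Proposition~\ref{pbounded205} and $\ca_\perp^{-1}\colon W^{-1,q}_\perp(\Omega)\to L^\infty(\Omega)$ via Theorem~\ref{tbounded101}, then interpolate using Proposition~\ref{pbounded401} on the domain side and the standard $L^p$ interpolation on the target side. The only difference is that you have spelled out details the paper leaves implicit.
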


\begin{proof}
The operator $\ca_\perp^{-1}$ maps $W^{-1,2}_\perp(\Omega)$ continuously into 
$W^{1,2}_\perp(\Omega) \subset L^r(\Omega)$ by Proposition~\ref{pbounded205}.
Moreover, $\ca_\perp^{-1}$ maps $W^{-1,q}_\perp(\Omega)$ continuously into 
$L^\infty(\Omega)$ by Theorem~\ref{tbounded101}.
Now use complex interpolation and Proposition~\ref{pbounded401}.
\end{proof}

Due to Proposition~\ref{pbounded401} and the work from the previous sections, a
maximal Sobolev regularity result for $p$ close to $2$ follows by an application of the
{\v{S}}ne{\u{\i}}berg stability theorem. 

\begin{thm} \label{tbounded402}
Let $\Omega \subset \Ri^d$ be a bounded connected open set.
Let $r \in (2,\infty)$ and
suppose that $W^{1,2}(\Omega) \subset L^r(\Omega)$.
Let $\mu \colon \Omega \to \Ri^{d \times d}$ be a bounded measurable 
elliptic function and let $\ca \colon W^{1,2}(\Omega) \to (W^{1,2}(\Omega))^*$
be the associated operator.
Then there exists a
$\delta > 0$ such that $\ca_\perp$ is a topological isomorphism between
$W^{1,p}_\perp(\Omega)$ and $W^{-1,p}_\perp(\Omega)$ for all $p \in (2-\delta,2+\delta)$.
\end{thm}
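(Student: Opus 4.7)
The plan is to deduce the theorem from three ingredients already at hand: Proposition~\ref{pbounded205}, which gives the isomorphism at $p = 2$; Proposition~\ref{pbounded401}, which places the families $(W^{1,p}_\perp(\Omega))_p$ and $(W^{-1,p}_\perp(\Omega))_p$ into complex interpolation scales; and the {\v{S}}ne{\u{\i}}berg stability theorem, which guarantees that isomorphisms on a complex interpolation scale persist in an open neighborhood of any parameter at which they hold.

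Concretely, I would fix exponents $p_0, p_1 \in (1, \infty)$ with $p_0 < 2 < p_1$ and consider the Banach couples $(W^{1,p_0}_\perp(\Omega), W^{1,p_1}_\perp(\Omega))$ and $(W^{-1,p_0}_\perp(\Omega), W^{-1,p_1}_\perp(\Omega))$. Since $\mu$ is bounded, H\"older's inequality gives $|\ca(u,v)| \leq \|\mu\|_\infty \|\nabla u\|_p \|\nabla v\|_{p'}$ for dual exponents $p,p'$, so $\ca_\perp$ is a bounded operator from $W^{1,p_j}_\perp(\Omega)$ into $W^{-1,p_j}_\perp(\Omega)$ for $j = 0, 1$, with norm controlled by $\|\mu\|_\infty$ independently of $p_j$. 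By Proposition~\ref{pbounded401}, complex interpolation of these couples at parameter $\theta \in (0,1)$ reproduces exactly $W^{1,p(\theta)}_\perp(\Omega)$ and $W^{-1,p(\theta)}_\perp(\Omega)$, where $\frac{1}{p(\theta)} = \frac{1-\theta}{p_0} + \frac{\theta}{p_1}$. Picking $\theta_0 \in (0,1)$ with $p(\theta_0) = 2$, Proposition~\ref{pbounded205} tells us that $\ca_\perp$ is an isomorphism at this interpolation parameter.

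The {\v{S}}ne{\u{\i}}berg stability theorem now applies to the operator $\ca_\perp : [W^{1,p_0}_\perp(\Omega), W^{1,p_1}_\perp(\Omega)]_\theta \to [W^{-1,p_0}_\perp(\Omega), W^{-1,p_1}_\perp(\Omega)]_\theta$: the set of $\theta$ for which this map is an isomorphism is open, yielding a neighborhood $I$ of $\theta_0$ in $(0,1)$. Transporting $I$ through the identification of Proposition~\ref{pbounded401} produces the desired interval $(2 - \delta, 2 + \delta)$ on which $\ca_\perp$ is an isomorphism between $W^{1,p}_\perp(\Omega)$ and $W^{-1,p}_\perp(\Omega)$. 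The main point to verify carefully is that the quantitative hypotheses of {\v{S}}ne{\u{\i}}berg's theorem are met, namely uniform boundedness of $\ca_\perp$ across the scale, which is immediate from the $\|\mu\|_\infty$ bound, and quantitative control on the inverse at $\theta_0$, which is provided by Proposition~\ref{pbounded205} together with Corollary~\ref{cbounded203}; since these constants depend only on $\mu$ and $\Omega$, this step is essentially bookkeeping rather than a substantial obstacle.
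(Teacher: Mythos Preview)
Your proposal is correct and follows essentially the same route as the paper: invoke Proposition~\ref{pbounded205} for the isomorphism at $p=2$, use Proposition~\ref{pbounded401} to recognise the $W^{\pm1,p}_\perp(\Omega)$ spaces as a complex interpolation scale, and then apply the {\v{S}}ne{\u{\i}}berg stability theorem. The only difference is that you spell out the boundedness of $\ca_\perp$ across the scale and frame the argument via an explicit choice of endpoints $p_0<2<p_1$, which the paper leaves implicit; note, though, that only the qualitative version of {\v{S}}ne{\u{\i}}berg's theorem is needed here, so the quantitative bookkeeping you mention is not actually required for the statement as given.
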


\begin{proof}
Under the assumptions, $\ca_\perp$ is a topological isomorphism between
$W^{1,2}_\perp(\Omega)$ and $W^{-1,2}_\perp(\Omega)$ by
Proposition~\ref{pbounded205}.
Proposition~\ref{pbounded401} shows that these spaces are
simultaneous interpolation spaces in the $W^{1,p}_\perp(\Omega)$ and
$W^{-1,p}_\perp(\Omega)$ scale.
The {\v{S}}ne{\u{\i}}berg stability
theorem~\cite{Sneiberg} implies that there is a $\delta > 0$ such that $\ca_\perp$
remains an isomorphism between $W^{1,p}_\perp(\Omega)$ and $W^{-1,p}_\perp(\Omega)$ for 
all $p \in (2-\delta,2+\delta)$.
\end{proof}

There exist quantitative results on the size of $\delta$ derived
from the {\v{S}}ne{\u{\i}}berg result in
Theorem~\ref{tbounded402}.
 We refer to~\cite{ABES}, Appendix~A.
The most crucial information is that one can choose $\delta$ to
depend only on the ellipticity constant and the upper bound
$\|\mu\|_\infty$ of the coefficient function $\mu$
of~$\ca$.
Moreover, for all $p \in (2-\delta,2+\delta)$, the
operator norm
$\|\ca_\perp^{-1}\|_{W^{-1,p}_\perp(\Omega)\to W^{1,p}_\perp(\Omega)}$ 
can be estimated by a multiple of
$\|\ca_\perp^{-1}\|_{W^{-1,2}_\perp(\Omega)\to W^{1,2}_\perp(\Omega)}$.
By Lax-Milgram, the latter can be estimated by $1/\nu$, where $\nu$ is
the ellipticity constant of~$\mu$.

Theorem~\ref{tbounded402} yields further corollaries
for $d=2$.

\begin{cor} \label{cor:cbounded403}
Adopt the notation and assumptions of Theorem~\ref{tbounded402}.
Let $d=2$.
Let $q \in (2,2+\delta)$ and suppose that $\frac12 - \frac1q > \frac1r$.
Then $W^{1,s}(\Omega) \subset L^\infty(\Omega)$ for all $s \geq q$.
\end{cor}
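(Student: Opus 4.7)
The plan is to apply Theorem~\ref{tbounded101} directly after reducing to the mean-free case. Given $u \in W^{1,s}(\Omega)$ with $s \geq q$, set $c = \frac{1}{|\Omega|} \int_\Omega u$ and $u_0 = u - c \in W^{1,s}_\perp(\Omega)$. Since $c$ is constant, it suffices to show $u_0 \in L^\infty(\Omega)$. Because $\Omega$ is bounded and $s \geq q > 2$, the inclusions $L^s(\Omega) \subset L^q(\Omega) \subset L^2(\Omega)$ hold, and hence $u_0 \in W^{1,q}_\perp(\Omega) \subset W^{1,2}_\perp(\Omega)$.

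Let $p$ denote the dual exponent of $q$. I would next verify that $\ca u_0 \in (W^{1,p}(\Omega))^*$. For every $v \in W^{1,p}(\Omega)$, H\"older's inequality applied to $\nabla u_0 \in L^q(\Omega)^d$ and $\nabla v \in L^p(\Omega)^d$ yields
\[
\bigl| \ca(u_0, v) \bigr| \leq \|\mu\|_\infty \, \|\nabla u_0\|_q \, \|\nabla v\|_p,
\]
so the functional $\ca u_0$, a priori defined on $W^{1,2}(\Omega)$, extends continuously to a functional on $W^{1,p}(\Omega)$.

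With this in hand, every hypothesis of Theorem~\ref{tbounded101} is met in dimension $d = 2$: the standing assumptions on $\Omega$, $r$ and $\mu$ are inherited from Theorem~\ref{tbounded402}; the exponent $q > 2 = d$ satisfies $\frac{1}{2} - \frac{1}{q} > \frac{1}{r}$ by hypothesis; and $u_0 \in W^{1,2}(\Omega)$ with $\ca u_0 \in (W^{1,p}(\Omega))^*$. Theorem~\ref{tbounded101} therefore gives $u_0 \in L^\infty(\Omega)$, and consequently $u \in L^\infty(\Omega)$. The argument is a direct reduction to Theorem~\ref{tbounded101}; there is no substantial technical obstacle.
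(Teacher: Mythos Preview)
Your argument is correct, and in fact it is more elementary than the paper's. The paper proves the corollary by combining Theorem~\ref{tbounded101} with Theorem~\ref{tbounded402}: the former gives $\ca_\perp^{-1} W^{-1,q}_\perp(\Omega) \subset L^\infty(\Omega)$, while the latter (the {\v{S}}ne{\u{\i}}berg isomorphism for $q \in (2,2+\delta)$) identifies $\ca_\perp^{-1} W^{-1,q}_\perp(\Omega)$ with $W^{1,q}_\perp(\Omega)$. You bypass Theorem~\ref{tbounded402} entirely by observing that the forward map $\ca \colon W^{1,q}_\perp(\Omega) \to W^{-1,q}_\perp(\Omega)$ is bounded for every $q>2$ simply by H\"older, so any $u_0 \in W^{1,q}_\perp(\Omega)$ already satisfies the hypothesis of Theorem~\ref{tbounded101}. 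This has a genuine payoff: your proof never uses the restriction $q < 2+\delta$, so it actually yields the stronger conclusion that $W^{1,s}(\Omega) \subset L^\infty(\Omega)$ for every $s>2$ with $\frac{1}{2}-\frac{1}{s} > \frac{1}{r}$, with no reference to the {\v{S}}ne{\u{\i}}berg parameter~$\delta$. In particular, Corollary~\ref{cbounded405} follows immediately from your argument without invoking Theorem~\ref{tbounded402} at all.
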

\begin{proof}
It follows from Theorem~\ref{tbounded101} that 
$\ca_\perp^{-1} W^{-1,q}_\perp(\Omega) \subset L^\infty(\Omega)$.
But $\ca_\perp^{-1} W^{-1,q}_\perp(\Omega) = W^{1,q}_\perp(\Omega)$
by Theorem~\ref{tbounded402}.
Since $W^{1,q}(\Omega) = W^{1,q}_\perp(\Omega) + \Ci \, \one$
the corollary follows.
\end{proof}

The parameter $\delta$ in the previous corollary depends on the
coefficient function $\mu$
via the {\v{S}}ne{\u{\i}}berg theorem. 
If $\Omega$ is smooth enough so that 
the full Sobolev embedding for $W^{1,2}(\Omega)$ is available, then 
no coefficient function is needed (at least in the formulation of the 
corollary).

\begin{cor} \label{cbounded405}
Let $\Omega \subset \Ri^2$ be a bounded connected open set.
Suppose that $W^{1,2}(\Omega) \subset L^r(\Omega)$ for all $r \in (2,\infty)$.
Then $W^{1,s}(\Omega) \subset L^\infty(\Omega)$ for all $s \in (2,\infty)$.
\end{cor}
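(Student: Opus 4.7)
The plan is to reduce this corollary to a direct application of Corollary~\ref{cor:cbounded403}, exploiting the freedom to choose the Sobolev embedding exponent $r$ as large as we wish.

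First, I would fix an arbitrary bounded measurable elliptic coefficient function $\mu$ on $\Omega$ (taking $\mu$ to be the identity matrix is the cleanest choice) and form the associated operator $\ca$. Since by hypothesis $W^{1,2}(\Omega) \subset L^r(\Omega)$ for some (in fact every) $r \in (2,\infty)$, the setting of Theorem~\ref{tbounded402} is in force, producing a parameter $\delta > 0$ such that $\ca_\perp$ is an isomorphism from $W^{1,p}_\perp(\Omega)$ onto $W^{-1,p}_\perp(\Omega)$ for all $p \in (2-\delta,2+\delta)$.

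Next, given an arbitrary $s \in (2,\infty)$, I would select $q \in (2,2+\delta)$ with $q \leq s$. This is always possible: if $s \geq 2+\delta$, any $q \in (2,2+\delta)$ works, while if $s \in (2,2+\delta)$ we take any $q \in (2,s]$. To invoke Corollary~\ref{cor:cbounded403} for this $q$, we also need a compatible $r$. Since our hypothesis provides $W^{1,2}(\Omega) \subset L^r(\Omega)$ for every $r \in (2,\infty)$, we are free to pick $r$ as large as required, in particular so that $\frac{1}{r} < \frac{1}{2} - \frac{1}{q}$, which is possible because $\frac{1}{2} - \frac{1}{q} > 0$.

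With these choices, Corollary~\ref{cor:cbounded403} applies and yields $W^{1,t}(\Omega) \subset L^\infty(\Omega)$ for all $t \geq q$; in particular for $t = s$. As $s \in (2,\infty)$ was arbitrary, this gives the desired conclusion. There is no real obstacle here; the only point that requires a moment of care is verifying that the quantifier structure works out, namely that for each $s > 2$ a single admissible pair $(q,r)$ can be found in $(2,2+\delta) \times (2,\infty)$ with $q \leq s$ and $\frac{1}{2} - \frac{1}{q} > \frac{1}{r}$, which is immediate from the freedom in choosing~$r$.
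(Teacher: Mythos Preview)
Your proposal is correct and is essentially the same as the paper's proof: the paper also fixes $\mu = I$, obtains $\delta$ from Theorem~\ref{tbounded402}, chooses $q \in (2,2+\delta) \cap (2,s]$, and then invokes Corollary~\ref{cor:cbounded403}. You are simply more explicit than the paper about the (implicit) step of picking $r$ large enough to satisfy $\frac{1}{2} - \frac{1}{q} > \frac{1}{r}$, which the hypothesis ``$W^{1,2}(\Omega) \subset L^r(\Omega)$ for all $r$'' makes immediate.
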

\begin{proof}
Choose $\mu = I$.
Let $\delta > 0$ be as in Theorem~\ref{tbounded402}.
Let $s \in (2,\infty)$.
Then there exists a $q \in (2,2+\delta) \cap (2,s]$.
Now apply Corollary~\ref{cor:cbounded403}.
\end{proof}

The third corollary
concerns H\"older regularity of solutions $u$ of $\ca_\perp u = T$ with
$T \in W^{-1,q}_\perp(\Omega)$ for $q>2$ and a uniform estimate.
We do not pass through Theorem~\ref{tbounded101} for this result.
The price to pay is a Sobolev
embedding assumption for the H\"older space similar to the one in Theorem~\ref{tbounded101}.

\begin{cor} \label{cbounded406}
Let $\Omega \subset \Ri^2$ be a bounded connected open set.
Suppose that for all $q \in (2,\infty)$ there exists an $\alpha \in (0,1)$
such that $W^{1,q}(\Omega) \subset C^\alpha(\overline\Omega)$.
Let $\mu \colon \Omega \to \Ri^{d \times d}$ be a bounded measurable 
elliptic function and let $\ca \colon W^{1,2}(\Omega) \to (W^{1,2}(\Omega))^*$
be the associated operator.
Then one has the following.
\begin{tabel}
\item \label{cbounded406-1}
For all $q \in (2,\infty)$ there exists an $\alpha \in (0,1)$
such that $\ca_\perp^{-1} W^{-1,q}_\perp(\Omega) \subset C^\alpha(\overline\Omega)$.
\item \label{cbounded406-2}
For all $q \in (2,\infty)$ and $R > 0$ the set 
\[ 
\bigl\{ \ca_\perp^{-1}(T) : T \in W^{-1,q}_\perp(\Omega) \mbox{ and }
      \|T\|_{W^{-1,q}_{\emptyset}(\Omega)} \leq R \bigr\}
\]
is compact in $C(\overline \Omega)$.
\end{tabel}
\end{cor}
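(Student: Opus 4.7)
The plan is as follows. First, I would verify that the hypotheses of Theorem~\ref{tbounded402} are available under the given Hölder embedding assumption. Fixing any single $q^\star > 2$, the assumption yields $W^{1,q^\star}(\Omega) \subset L^\infty(\Omega) \subset L^{q^\star+1}(\Omega)$, and Lemma~\ref{lbounded201} then produces an $r > 2$ with $W^{1,2}(\Omega) \subset L^r(\Omega)$. Theorem~\ref{tbounded402} therefore delivers a $\delta > 0$ such that $\ca_\perp$ is a topological isomorphism between $W^{1,p}_\perp(\Omega)$ and $W^{-1,p}_\perp(\Omega)$ for all $p \in (2-\delta, 2+\delta)$.

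For part~\ref{cbounded406-1}, I would fix $q \in (2,\infty)$ and set $p_0 = \min(q, 2+\delta/2) \in (2,2+\delta)$, so that $p_0 \leq q$. Writing $p_0'$ and $q'$ for the dual exponents, $p_0 \leq q$ forces $p_0' \geq q'$, whence $W^{1,p_0'}(\Omega) \subset W^{1,q'}(\Omega)$ on the bounded domain $\Omega$; by density of smooth functions and duality this produces a continuous inclusion $W^{-1,q}_\perp(\Omega) \hookrightarrow W^{-1,p_0}_\perp(\Omega)$. Composing with the {\v{S}}ne{\u{\i}}berg-based isomorphism $\ca_\perp^{-1}: W^{-1,p_0}_\perp(\Omega) \to W^{1,p_0}_\perp(\Omega)$ and the Hölder embedding $W^{1,p_0}(\Omega) \subset C^\alpha(\overline\Omega)$ (for some $\alpha \in (0,1)$ granted by the assumption) yields a continuous linear map $\ca_\perp^{-1}: W^{-1,q}_\perp(\Omega) \to C^\alpha(\overline\Omega)$, which is exactly claim~\ref{cbounded406-1}.

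For part~\ref{cbounded406-2}, this continuous composition sends the closed $R$-ball of $W^{-1,q}_\perp(\Omega)$ to a bounded subset of $C^\alpha(\overline\Omega)$; the inclusion $C^\alpha(\overline\Omega) \hookrightarrow C(\overline\Omega)$ is compact by Arzelà--Ascoli, so the image is relatively compact in $C(\overline\Omega)$. To secure closedness, hence compactness, I would take a sequence $u_n = \ca_\perp^{-1}(T_n)$ with $\|T_n\|_{W^{-1,q}_\emptyset(\Omega)} \leq R$ and $u_n \to u$ in $C(\overline\Omega)$: reflexivity of $W^{-1,q}_\perp(\Omega)$ together with weak lower semicontinuity of the norm provides a subsequential weak limit $T$ in the $R$-ball; weak continuity of the bounded linear operator $\ca_\perp^{-1}: W^{-1,q}_\perp(\Omega) \to W^{1,p_0}_\perp(\Omega)$ gives $u_{n_k} \rightharpoonup \ca_\perp^{-1}(T)$ in $W^{1,p_0}_\perp(\Omega)$, hence weakly in $L^2(\Omega)$, and uniqueness of weak $L^2$-limits forces $u = \ca_\perp^{-1}(T)$ in the original set.

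The principal technical issue is the case $q \geq 2+\delta$, where the {\v{S}}ne{\u{\i}}berg theorem no longer yields an isomorphism at the target exponent; the bounded-domain inclusion $W^{-1,q}_\perp \hookrightarrow W^{-1,p_0}_\perp$ is precisely what restores access to Theorem~\ref{tbounded402}. The closedness step in part~\ref{cbounded406-2} is otherwise routine. Note that, in line with the paper's remark preceding the corollary, this route bypasses Theorem~\ref{tbounded101} entirely.
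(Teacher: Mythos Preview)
Your argument is correct and essentially identical to the paper's: verify the Sobolev hypothesis via Lemma~\ref{lbounded201}, apply Theorem~\ref{tbounded402}, drop from $q$ to an exponent in $(2,2+\delta)\cap(2,q]$, and chain the dual inclusion with the {\v{S}}ne{\u{\i}}berg isomorphism and the assumed H\"older embedding. For part~\ref{cbounded406-2} you additionally spell out the closedness step that the paper leaves implicit in its one-line appeal to Arzel\`a--Ascoli.
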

\begin{proof} 
`\ref{cbounded406-1}'.
Since $C^\alpha(\overline\Omega) \subset L^\infty(\Omega) \subset L^r(\Omega)$
for all $\alpha \in (0,1)$ and $r \in (1,\infty)$, it follows from 
Lemma~\ref{lbounded201} that there exists an $r \in (2,\infty)$ such that 
$W^{1,2}(\Omega) \subset L^r(\Omega)$.
Let $\delta > 0$ be as in Theorem~\ref{tbounded402}.
Let $s \in (2,2+\delta) \cap (2,q]$.
By assumption there exists an $\alpha \in (0,1)$
such that $W^{1,s}(\Omega) \subset C^\alpha(\overline\Omega)$.
Then 
$\ca_\perp^{-1} W^{-1,q}_\perp(\Omega) 
\subset \ca_\perp^{-1} W^{-1,s}_\perp(\Omega) 
= W^{1,s}_\perp(\Omega) 
\subset C^\alpha(\overline\Omega)$.

`\ref{cbounded406-2}'.
This follows from statement~\ref{cbounded406-1} and the 
Arzel\`{a}--Ascoli theorem.
\end{proof}

The situation for the H\"older-Sobolev embedding assumption in Corollary~\ref{cbounded406}
is similar to the assumption on the Sobolev embedding in Theorem~\ref{tbounded101}.
It is satisfied for example when for all $q \in (2,\infty)$ the domain
$\Omega$ is a connected $W^{1,q}$-extension domain 
and then one can choose $\alpha = 1-2/q$, but there are also examples of
(non-extension) domains with sufficiently regular cusps where the assumption is satisfied in
the weaker form, see~\cite{AF} Theorem~4.53.
Note however that the optimal embedding for $W^{1,q}(\Omega)$
into the H\"older space of order $1-2/q$ implies the $W^{1,r}$-extension property 
for all $r > q$, see~\cite{Koskela} Theorem~A.

\subsection*{Acknowledgements}
The first and third named authors are grateful for a most stimulating 
stay at the RICAM.
Part of this work is supported by the Marsden Fund Council from Government
funding, administered by the Royal Society of New Zealand.

\small 

\noindent
{\sc A.F.M. ter Elst,
Department of Mathematics,
University of Auckland,
Private bag 92019,
Auckland 1142,
New Zealand}  \\
{\em E-mail address}\/: {\bf terelst@math.auckland.ac.nz}

\mbox{}

\noindent
{\sc H. Meinlschmidt,
Johann Radon Institute for Computational and Applied Mathematics (RICAM),
Altenberger Stra\ss e 69, 
4040 Linz, 
Austria}  \\
{\em E-mail address}\/: {\bf hannes.meinlschmidt@ricam.oeaw.ac.at}

\mbox{}

\noindent
{\sc J. Rehberg,
Weierstrass Institute for Applied Analysis and Stochastics,
Mohrenstr.~39, 
10117 Berlin, 
Germany}  \\
{\em E-mail address}\/: {\bf rehberg@wias-berlin.de}


\begin{thebibliography}{ABES19}

\bibitem[AF03]{AF}
{\sc Adams, R.~A. {\rm and} Fournier, J. J.~F.}, {\em Sobolev spaces}.
\newblock Second edition, Pure and Applied Mathematics 140. Elsevier/Academic
  Press, Amsterdam, 2003.

\bibitem[ABES19]{ABES}
{\sc Auscher, P., Bortz, S., Egert, M. {\rm and} Saari, O.}, Nonlocal
  self-improving properties: a functional analytic approach.
\newblock {\em Tunisian J. Math.} {\bf 1} (2019),  151--183.

\bibitem[BL76]{BL}
{\sc Bergh, J. {\rm and} L{\"o}fstr{\"o}m, J.}, {\em Interpolation spaces. An
  introduction}.
\newblock Grund\-lehren der mathematischen Wissenschaften 223. Springer-Verlag,
  Berlin etc., 1976.

\bibitem[Boj88]{Bojarski}
{\sc Bojarski, B.}, Remarks on Sobolev imbedding inequalities.
\newblock In {\sc Laine, L., Rickman, S. {\rm and} Sorvali, T.}, eds., {\em
  Complex analysis, Joensuu 1987}, Lecture Notes in Math. 1351,  52--68.
  Springer, Berlin, 1988.

\bibitem[Cia78]{Cia}
{\sc Ciarlet, P.~G.}, {\em The finite element method for elliptic problems}.
\newblock Studies in Mathematics and its Applications 4. North-Holland,
  Amsterdam, 1978.

\bibitem[Dan02]{Daners7}
{\sc Daners, D.}, A priori estimates for solutions to elliptic equations on
  non-smooth domains.
\newblock {\em Proc. Roy. Soc. Edinburgh Sect. A} {\bf 132} (2002),  793--813.

\bibitem[GGZ74]{GGZ}
{\sc Gajewski, H., Gr{\"o}ger, K. {\rm and} Zacharias, K.}, {\em Nichtlineare
  Operatorgleichungen und Operatordifferentialgleichungen}.
\newblock Mathematische Lehr\-b{\"u}cher und Monographien, II. Abteilung
  Mathematische Monographien 38. Akademie-Verlag, Berlin, 1974.

\bibitem[GT83]{GT}
{\sc Gilbarg, D. {\rm and} Trudinger, N.~S.}, {\em Elliptic partial
  differential equations of second order}.
\newblock Second edition, Grundlehren der mathematischen Wissenschaften 224.
  Springer-Verlag, Berlin etc., 1983.

\bibitem[KS80]{KinS}
{\sc Kinderlehrer, D. {\rm and} Stampacchia, G.}, {\em An introduction to
  variational inequalities and their applications}.
\newblock Pure and Applied Mathematics 88. Academic Press, New York, 1980.

\bibitem[Kos98]{Koskela}
{\sc Koskela, P.}, Extensions and imbeddings.
\newblock {\em J. Funct. Anal.} {\bf 159} (1998),  369--383.

\bibitem[LT97]{LiuTai}
{\sc Liu, F.-C. {\rm and} Tai, W.-S.}, Lusin properties and interpolation of
  Sobolev spaces.
\newblock {\em Topol. Methods Nonlinear Anal.} {\bf 9} (1997),  163--177.

\bibitem[MM79]{MaM}
{\sc Marcus, M. {\rm and} Mizel, V.~J.}, Every superposition operator mapping
  one Sobolev space into another is continuous.
\newblock {\em J.\ Funct.\ Anal.} {\bf 33} (1979),  217--229.

\bibitem[Maz11]{MazED2}
{\sc Maz'ya, V.~G.}, {\em Sobolev spaces with applications to elliptic partial
  differential equations}.
\newblock Second edition, Grundlehren der mathematischen Wissenschaften 342.
  Springer-Verlag, Berlin etc., 2011.

\bibitem[MP07]{MazyaPoborchii1}
{\sc Maz'ya, V.~G. {\rm and} Poborchi\u{\i}, S.}, Imbedding theorems for
  Sobolev spaces on domains with peak and on H{\"o}lder domains.
\newblock {\em St. Petersburg Math. J.} {\bf 18} (2007),  583--605.

\bibitem[MP09]{MazyaPoborchii2}
\leavevmode\vrule height 2pt depth -1.6pt width 23pt, On the solvability of the
  Neumann problem in domains with peak.
\newblock {\em St. Petersburg Math. J.} {\bf 20} (2009),  757--790.

\bibitem[{\v{S}}ne74]{Sneiberg}
{\sc {\v{S}}ne{\u{\i}}berg, I.~J.}, Spectral properties of linear operators in interpolation
  families of Banach spaces.
\newblock {\em Mat. Issled.} {\bf 9}, No.\ 2 (1974),  214--229.

\bibitem[Sta65]{Stam2}
{\sc Stampacchia, G.}, Le probl\`eme de Dirichlet pour les \'equations
  elliptiques du second ordre \`a coefficients discontinus.
\newblock {\em Ann. Inst. Fourier, Grenoble} {\bf 15} (1965),  189--258.

\bibitem[Tri78]{Tri}
{\sc Triebel, H.}, {\em Interpolation theory, function spaces, differential
  operators}.
\newblock North-Holland, Amsterdam, 1978.

\bibitem[Tr{\"o}10]{Troltzsch}
{\sc Tr{\"o}ltzsch, F.}, {\em Optimal control of partial differential
  equations. Theory, methods and applications}.
\newblock Graduate Studies in Mathematics 112. American Mathematical Society,
  Providence, RI, 2010.

\bibitem[Zie89]{Zie2}
{\sc Ziemer, W.~P.}, {\em Weakly differentiable functions}.
\newblock Graduate Texts in Mathematics 120. Springer-Verlag, New York, 1989.

\end{thebibliography}
\end{document}